\documentclass[12pt]{amsart}
\usepackage{amscd,amsmath,amsthm,amssymb}
\usepackage{pstcol,pst-plot,pst-3d}
\usepackage{lmodern,pst-node}
\usepackage{multicol}
\usepackage{epic,eepic}
\usepackage{amsfonts,amssymb,amscd,amsmath,enumerate,verbatim}
\psset{unit=0.7cm,linewidth=0.8pt,arrowsize=2.5pt 4}

\newpsstyle{fatline}{linewidth=1.5pt}
\newpsstyle{fyp}{fillstyle=solid,fillcolor=verylight}
\definecolor{verylight}{gray}{0.97}
\definecolor{light}{gray}{0.9}
\definecolor{medium}{gray}{0.85}
\definecolor{dark}{gray}{0.6}



\unitlength=0.7cm

%
%
%
\def\NZQ{\Bbb}               
\def\NN{{\NZQ N}}

\def\ZZ{{\NZQ Z}}

%
%
\def\frk{\frak}               

\def\Phi{{\frk n}}
\def\Phi{{\frk N}}
%

\def\Lc{{\mathcal L}}
\def\Cc{{\mathcal C}}
\def\Gc{{\mathcal G}}

\def\Ac{{\mathcal A}}
\def\Pc{{\mathcal P}}
\def\Bc{{\mathcal B}}

\def\Sc{{\mathcal S}}
%
\def\ab{{\bold a}}
\def\bb{{\bold b}}
\def\xb{{\bold x}}

\def\vb{{\bold v}}
\def\opn#1#2{\def#1{\operatorname{#2}}} 
%
\opn\chara{char} \opn\length{\ell} \opn\pd{pd} \opn\rk{rk}
\opn\projdim{proj\,dim} \opn\injdim{inj\,dim} \opn\rank{rank}
\opn\depth{depth} \opn\grade{grade} \opn\height{height}
\opn\embdim{emb\,dim} \opn\codim{codim}

\opn\Tr{Tr} \opn\bigrank{big\,rank}
\opn\superheight{superheight}\opn\lcm{lcm}
\opn\trdeg{tr\,deg}
\opn\reg{reg} \opn\lreg{lreg} \opn\ini{in} \opn\lpd{lpd}
\opn\size{size} \opn\sdepth{sdepth}
\opn\link{link}\opn\fdepth{fdepth}\opn\lex{lex}
%
\opn\div{div} \opn\Div{Div} \opn\cl{cl} \opn\Cl{Cl}
%
%
\opn\Spec{Spec} \opn\Supp{Supp} \opn\supp{supp} \opn\Sing{Sing}
\opn\Ass{Ass} \opn\Min{Min}\opn\Mon{Mon}
%
%
\opn\Ann{Ann} \opn\Rad{Rad} \opn\Soc{Soc}
%
%
\opn\Im{Im} \opn\Ker{Ker} \opn\Coker{Coker} \opn\Am{Am}
\opn\Hom{Hom} \opn\Tor{Tor} \opn\Ext{Ext} \opn\End{End}
\opn\Aut{Aut} \opn\id{id}

\opn\nat{nat}
\opn\pff{pf}
\opn\Pf{Pf} \opn\GL{GL} \opn\SL{SL} \opn\mod{mod} \opn\ord{ord}
\opn\Gin{Gin} \opn\Hilb{Hilb}\opn\sort{sort}
%
%
\opn\aff{aff} \opn\con{conv} \opn\relint{relint} \opn\st{st}
\opn\lk{lk} \opn\cn{cn} \opn\core{core} \opn\vol{vol}
\opn\link{link} \opn\star{star}\opn\lex{lex}\opn\set{set}
\opn\gr{gr}

%
%

\def\pot#1#2{#1[\kern-0.28ex[#2]\kern-0.28ex]}

%
%
\opn\dirlim{\underrightarrow{\lim}}
\opn\inivlim{\underleftarrow{\lim}}
%
%
%
\let\union=\cup
\let\sect=\cap

\let\Union=\bigcup
\let\Sect=\bigcap

%
%

\def\Implies{\ifmmode\Longrightarrow \else
        \unskip${}\Longrightarrow{}$\ignorespaces\fi}
\def\implies{\ifmmode\Rightarrow \else
        \unskip${}\Rightarrow{}$\ignorespaces\fi}
\def\iff{\ifmmode\Longleftrightarrow \else
        \unskip${}\Longleftrightarrow{}$\ignorespaces\fi}

\let\:=\colon
\newtheorem{Theorem}{Theorem}[section]
\newtheorem{Lemma}[Theorem]{Lemma}
\newtheorem{Corollary}[Theorem]{Corollary}
\newtheorem{Proposition}[Theorem]{Proposition}

\newtheorem{Example}[Theorem]{Example}

%
%
\let\epsilon\varepsilon
\let\kappa=\varkappa
%
%
\textwidth=15cm \textheight=22cm \topmargin=0.5cm
\oddsidemargin=0.5cm \evensidemargin=0.5cm \pagestyle{plain}
%
%
\def\qed{\ifhmode\textqed\fi
      \ifmmode\ifinner\quad\qedsymbol\else\dispqed\fi\fi}
\def\textqed{\unskip\nobreak\penalty50
       \hskip2em\hbox{}\nobreak\hfil\qedsymbol
       \parfillskip=0pt \finalhyphendemerits=0}
\def\dispqed{\rlap{\qquad\qedsymbol}}

%
\opn\dis{dis}
\def\pnt{{\raise0.5mm\hbox{\large\bf.}}}

\opn\Lex{Lex}



\begin{document}

\title {Ideals generated by adjacent $2$-minors}

\author {J\"urgen Herzog and Takayuki Hibi}

\thanks{
{\bf 2000 Mathematics Subject Classification:}
Primary 13P10, 13C13; Secondary 13P25,  62H17. \\
\, \, \, {\bf Keywords:}
Binomial Ideals, Ideals of $2$-adjacent minors, Contingency tables}

\address{J\"urgen Herzog, Fachbereich Mathematik, Universit\"at Duisburg--Essen, Campus Essen, 45117
Essen, Germany} \email{juergen.herzog@uni-essen.de}

\address{Takayuki Hibi, Department of Pure and Applied Mathematics, Graduate School of Information Science and Technology,
Osaka University, Toyonaka, Osaka 560-0043, Japan}
\email{hibi@math.sci.osaka-u.ac.jp}

\begin{abstract}
Ideals generated by adjacent $2$-minors are studied.
First, the problem when such an ideal is a prime ideal
as well as the problem when such an ideal possesses a quadratic Gr\"obner basis
is solved.
Second, we describe explicitly
a primary decomposition of the radical ideal of
an ideal generated by adjacent $2$-minors,
and challenge the question of classifying all ideals generated by adjacent $2$-minors
which are radical ideals.
Finally, we discuss connectedness of contingency tables in algebraic statistics.

\end{abstract}
\maketitle

\section*{Introduction}
Let $X=(x_{ij})_{i=1,\ldots,m\atop j=1,\ldots,m}$ be an $m\times n$-matrix of indeterminates, and let $K$ be an arbitrary field. The ideals of $t$-minors $I_t(X)$  in $K[X]=K[(x_{ij})_{i=1,\ldots,m\atop j=1,\ldots,m}]$ are well understood. A standard reference for determinantal ideals are the lecture notes \cite{BV} by Bruns and Vetter.  See also \cite{BH} for a short introduction to this subject.  Determinantal ideals  and the natural extensions of this class of ideals, including ladder determinantal ideals arise naturally in geometric contexts which partially explains the interest in them. One nice property of these ideals is that they are all Cohen--Macaulay prime ideals.

Motivated by applications to algebraic statistics one is lead to study ideals generated by an {\em arbitrary} set of $2$-minors of $X$. We refer the interested reader to the article \cite{DES} of Diaconis,  Eisenbud and Sturmfels where the encoding of the statistical problem to commutative algebra is nicely described. Here we just outline briefly some ideas presented in that paper. Let $\Bc$ be a subset of vectors of $\ZZ^n$. One defines the graph $G_\Bc$ whose vertex set is the set $\NN^n$ of  non-negative integer vectors. Two vectors $\ab$ and $\bb$ are connected by an edge of  $G_\Bc$  if $\ab-\bb\in\pm \Bc$. One of the central problems is to describe the connected components of this graph.  Here commutative algebra comes into play. One defines the ideal
\[
I_\Bc=(\xb^{\bb_+}-\xb^{\bb_-}\:\, \bb\in \Bc),
\]
where for a vector $\ab\in \ZZ^n$, the vectors $\ab_+, \ab_-\in \NN^n$  are the unique vectors with $\ab=\ab_+-\ab_-$. The crucial observation  is the following  \cite[Theorem 1.1.]{DES}: two vectors $\ab,\bb\in \NN^n$ belong to the same component of $G_\Bc$ if and only if $\xb^\ab-\xb^\bb\in I_\Bc$. Hence the question arises how one can decide whether a specific binomial $f=\xb^\ab-\xb^\bb$ belongs to a given binomial ideal $I$. This is indeed a difficult problem, but often  the following strategy yields necessary  and sometimes necessary and sufficient conditions for $f$ to belong to $I$, which can be expressed in terms of feasible  numerical conditions on the vectors $\ab$ and $\bb$. The idea is to write the given binomial ideal as an intersection $I=\Sect_{k=1}^rJ_k$ of ideals $J_k$. Then $f\in I$ if and only if $f\in J_k$ for all $k$. This strategy is useful only if each of the ideals $J_k$ has a simple structure, so that it is possible to describe the conditions that guarantee that $f$ belongs to $J_k$. A natural choice for such an intersection is a primary decomposition of $I$. By Eisenbud and Sturmfels \cite{ES} it is known that the primary components of $I$ are again binomial ideals (in the more general sense that a binomial ideal may also contain monomials). In the  case that $I$ is a radical ideal the natural choice for the ideals $J_k$ are the minimal prime ideals of $I$.

To be more specific we consider contingency tables with support in a subset
$\Sc\subset [m]\times [n]$,
where $\Sc$ is the set of certain
unit boxes
$$\{(i,j), (i,j+1), (i+1,j),(i+1,j+1)\}$$
with $1\leq i<m$ and $1\leq j<n$.
We call a set $T=\{a_{ij}\:\; (i,j)\in \Sc \}$ of non-negative integers a contingency table with support in $\Sc$. From the statistical point of view it is of interest to generate random contingency tables with fixed row and column sum. The row sums of the table $T$ are the sums $\sum_{j, \; (i,j)\in \Sc}a_{ij}$ and the column sums of $T$ are the sums  $\sum_{i,\; (i,j)\in \Sc}a_{ij}$.

Choosing $i_1<i_2$ and $j_1<j_2$ such that
$$Q=\{(i_1,j_1),(i_1,j_2),(i_2,j_1),(i_1,j_2)\}\subset \Sc,$$
we may produce a new contingency table $T'$ with same row and column sums by adding  or subtracting the table $D(Q)=\{d_{ij}\:\; (i,j)\in \Sc\}$, where $d_{ij}=0$ if $(i,j)\notin Q$, $d_{i_1,j_1}=d_{i_2,j_2}=1$ and $d_{i_1,j_2}=d_{i_2,j_1}=-1$, provided all entries of $T'$ are again non-negative.

Given a set $\mathcal{D}=\{D_1,\ldots,D_r\}$ of tables as described in the preceding paragraph, we may start a random walk beginning with $T$ by adding or subtracting in each step a randomly chosen table $D_i$. If in a step like this, which we call an adjacent move,   the new table has non-negative entries we continue our walk, otherwise we go back one step and choose another $D_i$.  The contingency tables which we can reach by this procedure from the given table $T$ are called {\em connected} to $T$ with respect to $\mathcal{D}$.

In this paper we are interested in walks where $\mathcal{D}$ consists of the  tables $D(Q)$,  where $Q$ runs though all unit boxes in $\Sc$. By the above mentioned \cite[Theorem 1.1]{DES} we are lead to study ideals generated by adjacent $2$-minors, that is, minors of the form $x_{i,j}x_{i+1,j+1}-x_{i+1,j}x_{i,j+1}$. Following the general strategy described above it is desirable to understand the primary decomposition of such ideals, or at least their minimal prime ideals. Ho\c{s}ten and Sullivant \cite{HS2} describe in a very explicit way all the minimal prime ideals for the ideal generated by all adjacent $2$-minors of an $m\times n$-matrix. In  Theorem~\ref{minimalprimes} we succeed in describing the minimal prime ideals of the ideal of a configuration of adjacent $2$-minors under the mild assumption that this configuration is special, a concept which has first been introduced by Qureshi \cite{Q}. Special configurations include the case considered by Ho\c{s}ten and Sullivant, but are much more general. Our description is not quite as explicit as that  of Ho\c{s}ten and Sullivant, but explicit enough to determine in each particular case all the minimal prime ideals. Part of the result given in Theorem~\ref{minimalprimes} can also be derived from \cite[Corollary2.1]{HS1} of Ho\c{s}ten and Shapiro, since ideals generated by $2$-adjacent minors are lattice basis ideals. Though the minimal prime ideals are known, the knowledge about the embedded prime ideals of an ideal generated by 2-adjacent minors is very little, let alone the knowledge on its primary decomposition. In \cite{DES} the primary decomposition of the ideal of all adjacent $2$-minors of a $4\times 4$-matrix is given, and in \cite{HS1} that of a $3\times 5$-matrix. It is hard to see a general pattern from these results.

Ideals generated by adjacent $2$-minors tend to have a  non-trivial radical, and are rarely prime ideals. In the first section of this paper we classify all ideals generated by adjacent $2$-minors which are prime ideals. The result is described in Theorem~\ref{prime}. They are the ideals of adjacent $2$-minors attached to a chessboard configuration with no 4-cycles.

One method to show that an ideal is a radical ideal, is to compute its initial ideal with respect to some monomial order. If the initial ideal is squarefree, then the given ideal is a radical ideal. In Section 2 we classify all ideals generated by adjacent $2$-minors which have a quadratic Gr\"obner basis (Theorem~\ref{quadratic}). It turns out that these are the ideals of adjacent $2$-minors corresponding to configurations whose components are  monotone  paths meeting is a suitable way. In particular, those ideals of adjacent $2$-minors are radical ideals. In general the radical of an ideal of adjacent $2$-minors attached to a special configuration can be naturally written as an intersections of prime ideals of relatively simple nature, see Theorem~\ref{primeintersection}. These prime ideals are indexed by the so-called admissible sets. These are subsets of the set $\Sc$ which defines the configuration, and can be described in a purely combinatorial way.

In Section 4 we aim at classifying configurations whose ideal  of adjacent $2$-minors is a radical ideal. It is not so hard to see (cf.\ Proposition~\ref{onedirection}) that a connected special  configuration whose ideal  of adjacent $2$-minors is a radical ideal should be a path or a cycle.  Computations show that the cycles should have  length at least 12. We expect that the ideal of adjacent $2$-minors attached to any path is a radical ideal, and prove this in Theorem~\ref{radicalpath} under the additional assumption that the ideal has no embedded prime ideals.

In the last section of this paper we describe in detail what it means that special contingency tables are connected via adjacent moves.

\section{Prime ideals generated by adjacent $2$-minors}

Let $X=(x_{ij})_{i=1,\ldots,m\atop j=1,\ldots,n}$ be a matrix of indeterminates, and let $S$ be the polynomial ring over a field $K$ in the variables $x_{ij}$. Let  $\delta =[a_1,a_2|b_1,b_2]$ be a $2$-minor. The variables  $x_{a_i,b_j}$ are called the {\em vertices} and the sets $\{x_{a_1,b_1}, x_{a_1,b_2}\}]$, $\{x_{a_1,b_1},x_{a_2,b_1}\}$, $\{x_{a_1,b_2},x_{a_2,b_2}\}$ and $\{x_{a_2,b_1},x_{a_2,b_2}\}$  the {\em edges} of the minor $[a_1,a_2|b_1,b_2]$. The set of vertices of $\delta$ will be denoted by $V(\delta)$. The $2$-minor  $\delta =[a_1,a_2|b_1,b_2]$ is called {\em adjacent} if $a_2=a_1+1$ and $b_2=b_1+1$.

Let $\Cc$ be any set of adjacent $2$-minors. We call such a set also a configuration of adjacent $2$-minors.  We denote by $I(\Cc)$ the  ideal generated by the elements of $\Cc$.
The set of vertices   of $\Cc$,  denoted $V(\Cc)$, is the union of the vertices of its adjacent $2$-minors. Two distinct minors in $\delta,\gamma\in \Cc$ are called {\em connected} if there exist $\delta_1\ldots,\delta_r\in \Cc$ such that $\delta=\delta_1$, $\gamma=\delta_r$,  and $\delta_i$  and $\delta_{i+1}$ have a common edge.

Following Quereshi \cite{Q} we call a configuration of adjacent $2$-minors {\em special}, if the following condition is satisfied: for any two adjacent $2$-minors $\delta_1, \delta_2\in \Cc$ which have exactly one vertex in common, there exists a $\delta\in \Cc$ which has a common edge with $\delta_1$ and a common edge with $\delta_2$. Any special configuration of adjacent $2$-minors is a disjoint union of connected special configuration of adjacent $2$-minors.

Any maximal subset $D$ of $\Cc$ with the property that any two minors of $D$ are connected, is called a {\em connected component of $\Cc$}. To $\Cc$ we attach a graph $G_\Cc$ as follows: the vertices of $G_\Cc$ are the connected components  of $\Cc$. Let $A$ and $B$ be two connected components of $C$. Then there is an edge between  $A$ and $B$ if there exists a minor $\delta\in A$ and a minor $\gamma\in B$ which have exactly one vertex in common. Note that $G_\Cc$ may have multiple edges.

A set of adjacent $2$-minors is called a {\em chessboard configuration}, if any two  minors of this set meet in at most one vertex. An example of a chessboard configuration is given in Figure~\ref{chessboard}.  An ideal $I\subset S$ is called a {\em chessboard ideal} if it is generated by a chessboard configuration.  Note that the graph $G_\Cc$
of a chessboard configuration is a simple bipartite graph.

\begin{figure}[hbt]
\begin{center}
\psset{unit=0.9cm}
\begin{pspicture}(4.5,-0.5)(4.5,4)
\pspolygon[style=fyp,fillcolor=light](3,0)(3,1)(4,1)(4,0)
\pspolygon[style=fyp,fillcolor=light](2,1)(2,2)(3,2)(3,1)
\pspolygon[style=fyp,fillcolor=light](4,1)(4,2)(5,2)(5,1)
\pspolygon[style=fyp,fillcolor=light](3,2)(3,3)(4,3)(4,2)
\pspolygon[style=fyp,fillcolor=light](3,0)(3,1)(4,1)(4,0)
\pspolygon[style=fyp,fillcolor=light](5,0)(5,1)(6,1)(6,0)
\pspolygon[style=fyp,fillcolor=light](5,2)(5,3)(6,3)(6,2)
\pspolygon[style=fyp,fillcolor=light](6,3)(6,4)(7,4)(7,3)
\end{pspicture}
\end{center}
\caption{}\label{chessboard}
\end{figure}

\begin{Theorem}
\label{prime}
Let $I$ be an ideal generated by adjacent $2$-minors. Then the following conditions are equivalent:
\begin{enumerate}
\item[{\em (a)}] $I$ is a prime ideal.
\item[{\em (b)}] $I$ is a chessboard ideal and $G_\Cc$ has no cycle of length $4$.
\end{enumerate}
\end{Theorem}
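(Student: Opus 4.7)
The plan is to prove each implication separately.

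For (a)$\Rightarrow$(b), I argue the contrapositive in two stages. First, if $\Cc$ is not a chessboard configuration, two minors share an edge, and after coordinatising so that $\delta_1=x_{11}x_{22}-x_{12}x_{21}$ and $\delta_2=x_{12}x_{23}-x_{13}x_{22}$ share the edge $\{x_{12},x_{22}\}$, I compute directly
\[
x_{23}\delta_1+x_{21}\delta_2\ =\ x_{22}(x_{11}x_{23}-x_{13}x_{21}).
\]
Neither $x_{22}$ nor the non-adjacent $2$-minor $\mu=x_{11}x_{23}-x_{13}x_{21}$ lies in $I$---the former by a degree count, the latter because any equality $\mu=a\delta_1+b\delta_2$ with $a,b\in K$ fails at the monomial $x_{11}x_{23}$---so $I$ has zero divisors. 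Second, if $\Cc$ is chessboard but $G_\Cc$ contains a $4$-cycle $\delta_1,\delta_2,\delta_3,\delta_4$, the sum-to-zero condition on the four diagonal moves making up the cycle forces the four shared vertices $v_1,v_2,v_3,v_4$ to be exactly the corners of a single unit $2\times 2$ block. Each $\delta_i$ then contains two of the $v_j$, so every generator of $I$ lies in the monomial prime $P=(v_1,v_2,v_3,v_4)$ of height $4$. Since $I$ is a complete intersection of height $4$ but $v_1\in P\setminus I$, one concludes $I\subsetneq P$, and so $I$ is not prime.

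For (b)$\Rightarrow$(a) I induct on $|\Cc|$. The base case $|\Cc|=1$ is the well-known primality of $(xy-zw)$. For the inductive step, pick $\delta\in\Cc$, set $\Cc'=\Cc\setminus\{\delta\}$, and observe that removing a minor preserves both the chessboard property and the absence of $4$-cycles in $G_\Cc$; by induction $R=S'/I(\Cc')$ is a domain. Writing $\delta=xy-zw$ and letting $d$ denote the degree of $\delta$ in $G_\Cc$, in the cases $d\le 1$ or $d=2$ with the two shared vertices occupying a diagonal of $\delta$, the ring $S/I(\Cc)$ is isomorphic to $R[\text{new variables}]/(uv-c)$ with $c\in R$ a product of nonzero variables, hence itself nonzero. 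A standard localisation argument (inverting $c$, or the new variable $u$) then shows that this quotient is a domain.

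The hard part will be choosing the minor $\delta$ so that it falls into one of these benign configurations at every step of the induction. Leaves of $G_\Cc$ suffice when the graph is a forest, so the real work lies in the cyclic components. Under the natural $45^{\circ}$ rotation, block positions sit on $\ZZ^2$ and $G_\Cc$ becomes an induced subgraph of $\ZZ^2$ containing no unit square. The combinatorial lemma I would prove is that any simple cycle in such a subgraph must contain two consecutive edges pointing in the same lattice direction, that is, a block through which the cycle passes straight; at such a block the two on-cycle shared corners are diagonally opposite, giving the benign $d=2$ case. Ruling out extra shared corners and degree-$\ge 3$ obstructions via the no-$4$-cycle hypothesis, and verifying that the element $c\in R$ really is nonzero across all induction steps, is where the bulk of the technical work will lie.
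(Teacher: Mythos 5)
Your shared-edge computation in (a)$\Rightarrow$(b) is essentially the paper's argument and is fine (to see that $\mu=x_{11}x_{23}-x_{13}x_{21}\notin I$ you must compare against \emph{all} generators, not just $\delta_1,\delta_2$, but the same monomial count works since $x_{11}x_{23}$ occurs in no adjacent $2$-minor). The $4$-cycle case, however, has a real gap: your geometric observation that the four shared vertices $v_1,\dots,v_4$ form a unit block is correct, but the assertion that ``every generator of $I$ lies in $P=(v_1,v_2,v_3,v_4)$'' is false as soon as $\Cc$ contains any minor besides the four on the cycle, and the claim that $I$ is a complete intersection of height $4$ is both unproved and only meaningful for the four-generator subideal. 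Since primality of $I(\Cc)$ does not obviously pass to subconfigurations, you cannot simply reduce to the case $\Cc=\{\delta_1,\dots,\delta_4\}$. The paper avoids this by producing an explicit zero divisor valid in the full ideal: with $\delta_1=ae-bd$, $\delta_2=ej-fi$, $\delta_3=hl-ik$, $\delta_4=ch-dg$ one has $h(bcjk-afgl)\in I$ while neither $h$ nor $bcjk-afgl$ lies in $I$. You need a witness of this kind.

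For (b)$\Rightarrow$(a) your route is genuinely different from the paper's --- the paper reduces primality to ``all variables are nonzerodivisors'' via the lattice-basis-ideal saturation result (Lemma~\ref{regular}, resting on Proposition~\ref{saturation}) and then runs a Gr\"obner basis argument, whereas your localization induction would bypass that machinery entirely --- and the ring-theoretic kernel of your argument ($R[z,w]/(zw-c)$ is a domain for $R$ a domain and $0\neq c\in R$) is sound. But the proof is not complete: everything hinges on the existence, in every chessboard configuration without $4$-cycles, of a minor whose two \emph{free} vertices form a diagonal (this is exactly the paper's Lemma~\ref{notallispossible}), and you explicitly defer this. Note that the difficulty is not cosmetic: the naive extremal choice (the lexicographically minimal block) can have its two shared corners on an \emph{edge}, i.e.\ $S/I(\Cc)\cong R[x,z]/(xy-zw)$ with $y,w\in R$, which is not one of your benign cases, and a block where a cycle ``passes straight'' may still have a third neighbour spoiling freeness. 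Your rotated-lattice lemma (no all-turns simple cycle in an induced subgraph of $\ZZ^2$ without unit squares) is true and provable by a short extremal argument, but ruling out the degree $\geq 3$ obstructions --- which you yourself flag as ``the bulk of the technical work'' --- is precisely where the no-$4$-cycle hypothesis must be used, and until that is written down the induction does not get off the ground.
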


For the proof of this result we shall need some concepts related to lattice ideals.

Let $\Lc\subset \ZZ^n$ be a  lattice. Let $K$ be a field. The {\em lattice ideal}   attached to $\Lc$ is the binomial ideal $I_\Lc\subset K[x_1,\ldots,x_n]$ generated by all binomials
\[
\xb^\ab-\xb^\bb\quad \text{with}\quad \ab-\bb\in \Lc.
\]
$\Lc$ is called saturated if for all $\ab\in \ZZ^n$ and $c\in \ZZ$ such that $c\ab\in \Lc$ it follows that $\ab\in \Lc$. The lattice ideal $I_\Lc$ is a prime ideal if and only if $\Lc$ is saturated.

Let $\vb_1,\ldots, \vb_m$ be a basis of $\Lc$. Ho\c{s}ten and Shapiro \cite{HS1} call the ideal generated by the binomials $\xb^{\vb_i^+}-\xb^{\vb_i^-}$, $i=1,\ldots,m$, a {\em lattice basis ideal} of $\Lc$. Here $\vb^+$ denotes the vector obtained from $\vb$ by replacing all negative components of $\vb$ by zero, and $\vb^-=-(\vb-\vb^+)$.

Fischer and Shapiro \cite{FS}, and Eisenbud and Sturmfels \cite{ES} showed

\begin{Proposition}
\label{saturation}
Let $J$ be a lattice basis ideal of the saturated lattice $\Lc\subset \ZZ^n$. Then $J\: (\prod_{i=1}^nx_i)^\infty =I_\Lc$.
\end{Proposition}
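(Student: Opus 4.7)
The plan is to pass to the Laurent polynomial ring $R_T := K[x_1^{\pm 1},\ldots,x_n^{\pm 1}]$, where I write $R = K[x_1,\ldots,x_n]$ and $T = x_1\cdots x_n$. I will use the standard identification of a saturation with the contraction of the extension under this localization: for any ideal $I \subseteq R$,
\begin{equation*}
I : T^\infty \;=\; IR_T \cap R.
\end{equation*}
Applying this to both $J$ and $I_\Lc$, it suffices to prove two things: first, that $JR_T = I_\Lc R_T$ inside $R_T$; and second, that $I_\Lc : T^\infty = I_\Lc$.

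For the first point, each generator $\xb^{\vb_i^+} - \xb^{\vb_i^-}$ of $J$ is a unit multiple in $R_T$ of the Laurent binomial $\xb^{\vb_i} - 1$ (multiply by the unit $\xb^{-\vb_i^-}$). So $JR_T$ is the $R_T$-ideal generated by $\{\xb^{\vb_i}-1 : i=1,\ldots,m\}$, while $I_\Lc R_T$ is generated by $\{\xb^\vb - 1 : \vb \in \Lc\}$. The inclusion $JR_T \subseteq I_\Lc R_T$ is immediate, and the reverse follows by expressing an arbitrary $\vb \in \Lc$ as an integral combination of the basis $\vb_1,\ldots,\vb_m$ and iterating the elementary identities
\begin{equation*}
\xb^{\vb+\wb}-1 \;=\; \xb^\vb(\xb^\wb-1) + (\xb^\vb-1), \qquad \xb^{-\vb}-1 \;=\; -\xb^{-\vb}(\xb^\vb-1).
\end{equation*}
These telescopings propagate from the basis to all of $\Lc$. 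Consequently $JR_T = I_\Lc R_T$, and therefore $J : T^\infty = I_\Lc : T^\infty$.

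For the second point, saturation of $\Lc$ forces $I_\Lc$ to be prime, so $I_\Lc : T^\infty = I_\Lc$ will follow once $T \notin I_\Lc$; equivalently, once no nonzero monomial of $R$ lies in $I_\Lc$. To verify this I would introduce the $\ZZ^n/\Lc$-grading on $R$ in which $\deg x_i = \bar\eb_i$: each binomial generator $\xb^\ab - \xb^\bb$ of $I_\Lc$ is homogeneous of degree $\bar\ab = \bar\bb$, so $I_\Lc$ is a graded ideal in this grading. A short check shows that the graded piece $(I_\Lc)_{\bar g}$ is $K$-spanned by binomials $\xb^\alpha - \xb^\beta$ with $\alpha,\beta \in \NN^n \cap (g+\Lc)$, which is precisely the augmentation kernel of $R_{\bar g}$, i.e.\ the subspace of elements whose monomial coefficients sum to zero. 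A single monomial has coefficient sum $1$, so no nonzero monomial belongs to $I_\Lc$; in particular $T \notin I_\Lc$, completing the argument.

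The main obstacle is really this monomial-freeness of $I_\Lc$: it cannot be deduced from primality alone, and is precisely where the combinatorial structure of a lattice ideal enters the proof. Everything else is a routine localization manoeuvre together with the telescoping identity for $\xb^{\vb+\wb}-1$, so once the grading argument is in hand the whole proposition falls out quickly.
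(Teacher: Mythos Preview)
Your argument is correct. Note, however, that the paper does not actually give its own proof of this proposition: it is stated as a result of Fischer--Shapiro and Eisenbud--Sturmfels and simply cited. What you have written is essentially the standard proof one finds in those sources---pass to the Laurent polynomial ring, observe that both $J$ and $I_\Lc$ extend to the same ideal there (via the telescoping identities you wrote), and then check that $I_\Lc$ is already saturated with respect to $T$. Your use of the $\ZZ^n/\Lc$-grading to show that $I_\Lc$ contains no monomial is the right mechanism; one small remark is that this grading argument in fact shows $I_\Lc:T^\infty=I_\Lc$ for \emph{any} lattice $\Lc$, without invoking primality, since multiplication by $T^k$ preserves the coefficient-sum in each graded piece. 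But appealing to primality, as you do, is perfectly valid here since $\Lc$ is assumed saturated.
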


As a consequence of this proposition we obtain:

\begin{Lemma}
\label{regular}
Let $I$ be an ideal generated by adjacent $2$-minors. Then $I$ is a prime ideal if and only if all variables $x_{ij}$ are nonzero divisors of $S/I$.
\end{Lemma}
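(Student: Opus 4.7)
The plan is to derive the lemma from Proposition~\ref{saturation} by realizing $I$ as a lattice basis ideal of a \emph{saturated} lattice $\Lc$; the nonzero divisor hypothesis then forces the saturation colon $I : (\prod x_{ij})^\infty$ to collapse to $I$ itself, so that $I = I_\Lc$ is prime.

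Concretely, I would attach to each adjacent $2$-minor $\delta = x_{i,j}x_{i+1,j+1} - x_{i+1,j}x_{i,j+1}$ in the generating set the vector $\vb_\delta = \eb_{i,j} + \eb_{i+1,j+1} - \eb_{i+1,j} - \eb_{i,j+1} \in \ZZ^{mn}$, and set $\Lc = \sum_\delta \ZZ \vb_\delta$. Then $I$ is visibly the lattice basis ideal of $\Lc$ \emph{provided} the $\vb_\delta$ form a $\ZZ$-basis of $\Lc$ and $\Lc$ is saturated. These two facts---with saturation being the main technical obstacle---both fall out of one lexicographic argument. Given any relation $c\ab = \sum_\delta c_\delta \vb_\delta$ with $\ab \in \ZZ^{mn}$ and $c \in \ZZ_{>0}$, pick $\delta_0$ with $c_{\delta_0} \neq 0$ whose top-left corner $(i_0, j_0)$ is lex-smallest. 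The only minors $\delta$ whose vector has nonzero $\eb_{i_0,j_0}$-coordinate have top-left corner in $\{(i_0,j_0),(i_0-1,j_0),(i_0,j_0-1),(i_0-1,j_0-1)\}$, and the last three are lex-smaller than $(i_0,j_0)$ and therefore carry coefficient $0$. Reading the $\eb_{i_0,j_0}$-coordinate of both sides yields $c\,\ab_{i_0,j_0} = c_{\delta_0}$, forcing $c \mid c_{\delta_0}$. Subtracting $(c_{\delta_0}/c)\vb_{\delta_0}$ from $\ab$ eliminates $\delta_0$, and induction on the number of nonzero $c_\delta$ shows $\ab \in \Lc$. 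Specializing to $c=1$, $\ab = 0$ gives linear independence.

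Once $\Lc$ is known to be saturated, Proposition~\ref{saturation} delivers $I : (\prod x_{ij})^\infty = I_\Lc$, a prime ideal. If each $x_{ij}$ is a nonzero divisor on $S/I$, then so is their product, hence $I : (\prod x_{ij})^\infty = I$ and $I = I_\Lc$ is prime. The converse is immediate: since $I$ is generated by irreducible quadratic binomials, no variable lies in $I$, so in the domain $S/I$ every variable is automatically a nonzero divisor.
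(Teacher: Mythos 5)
Your proof is correct and follows essentially the same route as the paper: realize $I$ as a lattice basis ideal of a saturated lattice, invoke Proposition~\ref{saturation}, and use the nonzero divisor hypothesis to collapse the colon ideal $I:(\prod x_{ij})^\infty$ to $I$. The only difference is that the paper cites \cite{ES} for saturation (via the full lattice of adjacent $2$-minors), whereas you prove it directly with a clean lexicographic triangularity argument --- a nice self-contained touch, but not a different method.
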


\begin{proof}
It is known from \cite{ES} that the ideal generated by all adjacent $2$-minors of $X$ is a lattice basis ideal, and that the corresponding lattice ideal is just the ideal of all $2$-minors of $X$. It follows that an ideal, such as $I$, which is  generated by any set of adjacent $2$-minors of $X$ is again a lattice basis ideal and that its corresponding lattice $\Lc$ is saturated. Therefore  its lattice ideal  $I_\Lc$ is a prime ideal. Assume now that all variables $x_{ij}$ are nonzero divisors of $S/I$. Then Proposition~\ref{saturation} implies that $I=I_\Lc$, so that $I$ is a prime ideal. The converse implication is trivial.
\end{proof}

For the proof of Theorem~\ref{prime} we need the following two lemmata.

\begin{Lemma}
\label{choice}
Let $I$ be an ideal generated by adjacent  $2$-minors.  For  each of the minors we mark one of the monomials in the support as a potential initial monomial. Then there exists an ordering  of the variables such that the marked monomials are indeed the initial monomials with respect to the lexicographic order induced by the given ordering of the variables.
\end{Lemma}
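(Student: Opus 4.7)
The plan is to reformulate the statement graph-theoretically and to produce the required variable order as a linear extension of a canonically chosen directed graph.

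First, recall that for the lex order induced by a linear order $y_1 > y_2 > \cdots$, the initial monomial of any degree-two binomial $y_ay_b - y_cy_d$ with four distinct variables is the monomial containing the largest variable. Hence the lemma reduces to producing a linear order on $V(\Cc)$ such that, in every minor $\delta \in \Cc$, the largest of the four variables of $\delta$ belongs to the marked monomial.

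For each adjacent minor $\delta = [a,a+1 \mid b,b+1] \in \Cc$, the marked monomial is one of the two diagonals of the $2\times 2$ block, and in either case exactly one of its two variables lies in row $a$; call this variable the leader $L(\delta)$. Define a directed graph $D$ on $V(\Cc)$ by adding, for every $\delta \in \Cc$, a directed edge from $L(\delta)$ to each of the two variables of the unmarked monomial of $\delta$. If $D$ is acyclic, any linear extension of $D$ places $L(\delta)$ strictly before both unmarked variables of $\delta$; using this extension as the lex order of the variables (earlier in the extension corresponding to larger in lex) makes $L(\delta)$ the largest variable of $\delta$ in every minor, so the marked monomial is the initial monomial of $\delta$.

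The only real step is to verify that $D$ is acyclic. Each edge of $D$ either lies inside a single row of the matrix (going between $x_{a,b}$ and $x_{a,b+1}$) or goes from a vertex in row $a$ to a vertex in row $a+1$; in particular every between-row edge points strictly downward. Hence any directed cycle in $D$ must lie inside a single row. But the within-row edges in row $a$ arise only from minors at position $(a,b) \in \Cc$, each such minor contributing exactly one directed edge between the adjacent vertices $x_{a,b}$ and $x_{a,b+1}$; these edges all sit on the path graph $x_{a,1}-x_{a,2}-\cdots-x_{a,n}$, which supports no directed cycle at all. The main insight making this work, and the only place where a different choice would get one stuck, is the choice of $L(\delta)$ in the top row of $\delta$: this is what forces between-row edges of $D$ to be uniformly downward and reduces the acyclicity question to the triviality that directed graphs on paths are acyclic.
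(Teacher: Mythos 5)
Your proof is correct and follows essentially the same route as the paper: both arguments rest on the observation that the marking constraints, read within each row, orient edges of a path (hence can create no cycle) while all cross-row comparisons point uniformly from row $a$ to row $a+1$, so a compatible total order (your topological sort, the paper's inductive extension of consecutive-pair constraints carried out row by row) exists. One cosmetic remark: your construction only guarantees that $L(\delta)$ exceeds the two unmarked variables, not that it is the largest of all four vertices of $\delta$ (its marked partner in row $a+1$ could be larger), but this weaker fact already ensures the maximal variable of $\delta$ lies in the marked monomial, which is all that is needed for that monomial to be the lex-initial one.
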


\begin{proof}
In general, suppose that, in the set $[N] = \{1, 2, \ldots, N\}$,
for each pair $(i, i+1)$ an ordering either $i < i + 1$ or $i > i + 1$ is given.
We claim that there is a total order $<$ on $[N]$ which preserves the given ordering.
Working by induction on $N$ we may assume that
there is a total order $i_1 < \ldots < i_{N-1}$ on $[N-1]$ which preserve the given ordering
for the pairs $(1, 2), \ldots, (N-2, N-1)$.
If $N-1 < N$, then $i_1 < \ldots < i_{N-1} < N$ is a required total order $<$ on $[N]$.
If $N-1 > N$ , then $N < i_1 < \ldots < i_{N-1}$ is a required total order $<$ on $[N]$.

The above fact guarantees the existence
of an ordering of the variables such that the marked monomials are
indeed the initial monomials with respect to the lexicographic order induced
by the given ordering of the variables, as can be seen in Example~\ref{order}.
\end{proof}

\begin{figure}[hbt]
\begin{center}
\psset{unit=0.9cm}
\begin{pspicture}(4.5,-0.5)(4.5,4)
\pspolygon[style=fyp,fillcolor=light](4,0)(4,1)(5,1)(5,0)
\pspolygon[style=fyp,fillcolor=light](3,2)(3,3)(4,3)(4,2)
\pspolygon[style=fyp,fillcolor=light](5,1)(5,2)(6,2)(6,1)
\pspolygon[style=fyp,fillcolor=light](4,2)(4,3)(5,3)(5,2)
\pspolygon[style=fyp,fillcolor=light](4,0)(4,1)(5,1)(5,0)
\pspolygon[style=fyp,fillcolor=light](2,2)(2,3)(3,3)(3,2)
\pspolygon[style=fyp,fillcolor=light](5,2)(5,3)(6,3)(6,2)
\pspolygon[style=fyp,fillcolor=light](6,0)(6,1)(7,1)(7,0)
\pspolygon[style=fyp,fillcolor=light](6,2)(6,3)(7,3)(7,2)
\pspolygon[style=fyp,fillcolor=light](4,1)(4,2)(5,2)(5,1)
\psline(3,2)(2,3)
\psline(3,2)(4,3)
\psline(4,3)(5,2)
\psline(5,2)(6,3)
\psline(6,2)(7,3)
\psline(5,1)(4,2)
\psline(5,1)(6,2)
\psline(5,0)(4,1)
\psline(6,0)(7,1)
\rput(2,3.3){1}
\rput(3,3.3){2}
\rput(4,3.3){3}
\rput(5,3.3){4}
\rput(6,3.3){5}
\rput(7,3.3){6}
\rput(2,1.7){7}
\rput(3, 1.7){8}
\rput(7, 1.7){12}
\rput(3.8,1.7){9}
\rput(4.7,1.7){10}
\rput(6.25,1.7){11}
\rput(3.65, 0.7){13}
\rput(4.7,0.7){14}
\rput(5.7,0.7){15}
\rput(7.3,0.7){16}
\rput(4,-0.3){17}
\rput(5,-0.3){18}
\rput(6,-0.3){19}
\rput(7,-0.3){20}
\end{pspicture}
\end{center}
\caption{}\label{marked}
\end{figure}

\begin{figure}[hbt]
\begin{center}
\psset{unit=0.9cm}
\begin{pspicture}(4.5,-0.5)(4.5,4)
\pspolygon[style=fyp,fillcolor=light](4,0)(4,1)(5,1)(5,0)
\pspolygon[style=fyp,fillcolor=light](3,2)(3,3)(4,3)(4,2)
\pspolygon[style=fyp,fillcolor=light](5,1)(5,2)(6,2)(6,1)
\pspolygon[style=fyp,fillcolor=light](4,2)(4,3)(5,3)(5,2)
\pspolygon[style=fyp,fillcolor=light](4,0)(4,1)(5,1)(5,0)
\pspolygon[style=fyp,fillcolor=light](2,2)(2,3)(3,3)(3,2)
\pspolygon[style=fyp,fillcolor=light](5,2)(5,3)(6,3)(6,2)
\pspolygon[style=fyp,fillcolor=light](6,0)(6,1)(7,1)(7,0)
\pspolygon[style=fyp,fillcolor=light](6,2)(6,3)(7,3)(7,2)
\pspolygon[style=fyp,fillcolor=light](4,1)(4,2)(5,2)(5,1)
\psline(3,2)(2,3)
\psline(3,2)(4,3)
\psline(4,3)(5,2)
\psline(5,2)(6,3)
\psline(6,2)(7,3)
\psline(5,1)(4,2)
\psline(5,1)(6,2)
\psline(5,0)(4,1)
\psline(6,0)(7,1)
\rput(2,3.3){4}
\rput(3,3.3){5}
\rput(4,3.3){3}
\rput(5,3.3){6}
\rput(6,3.3){2}
\rput(7,3.3){1}
\rput(2,1.7){7}
\rput(3, 1.7){8}
\rput(7, 1.7){12}
\rput(3.8,1.7){10}
\rput(4.7,1.7){11}
\rput(6.25,1.7){9}
\rput(3.65, 0.7){13}
\rput(4.7,0.7){14}
\rput(5.7,0.7){16}
\rput(7.3,0.7){15}
\rput(4,-0.3){17}
\rput(5,-0.3){18}
\rput(6,-0.3){19}
\rput(7,-0.3){20}
\end{pspicture}
\end{center}
\caption{}\label{relabeled}
\end{figure}

\begin{Example}
\label{order}
{\em
In Figure~\ref{marked} each of the squares represents an adjacent $2$-minor, and the diagonal in each of the squares indicates the marked  monomial of the corresponding $2$-minor. For any lexicographic order for which the marked monomials in Figure~\ref{marked} are the initial monomials the numbering of the variables in the top row must satisfy the following inequalities
\[
1<2>3<4>5>6.
\]
By using the general strategy given in the proof of Lemma~\ref{choice} we relabel the top row of the vertices by the numbers $1$ up to $6$, and proceed in the same way in the next rows. The final result can be seen in Figure~\ref{relabeled}
}
\end{Example}

We call a vertex of a $2$-minor in $\Cc$ {\em free}, if it does not belong to any other $2$-minor of $\Cc$,  and we call the  $2$-minor $\delta=ad-bc$ {\em free}, if  either (i) $a$ and $d$ are free, or (ii) $b$ and $c$ are free.

\begin{Lemma}
\label{notallispossible}
Let $\Cc$ be a chessboard configuration with $|\Cc|\geq 2$. Suppose  $G_{\Cc}$  does not contain a cycle of length $4$. Then the $G_{\Cc}$ contains at least two  free $2$-minors.
\end{Lemma}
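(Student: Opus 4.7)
The plan is to identify each minor $\delta \in \Cc$ with its top-left position $(r, c) \in \ZZ^2$ and then extract two free minors from the extremes of $\Cc$ under the linear functional $r + c$. In this encoding, the chessboard hypothesis forbids positions at $L^1$-distance $1$; two minors share a vertex iff their positions differ by $(\pm 1, \pm 1)$; and chessboard excludes more than one sharing partner at each of the four corners of $\delta$. The minor at $(r, c)$ is therefore free iff one of the pairs $\{(r - 1, c - 1), (r + 1, c + 1)\}$ (its main diagonal of sharing partners) or $\{(r - 1, c + 1), (r + 1, c - 1)\}$ (its anti-diagonal) is disjoint from $\Cc$.

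Set $m = \min\{r + c : (r, c) \in \Cc\}$ and $M = \max\{r + c : (r, c) \in \Cc\}$. In the degenerate case $m = M$, all positions lie on the single line $r + c = m$; each main-diagonal partner $(r \pm 1, c \pm 1)$ lies at level $m \pm 2$, hence outside $\Cc$; every minor in $\Cc$ is free, and $|\Cc| \geq 2$ yields the claim.

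The substantive case is $m < M$. I would fix $p_1 = (r_1, c_1) \in \Cc$ with $r_1 + c_1 = m$. The NW position $(r_1 - 1, c_1 - 1)$ has level $m - 2$ and so lies outside $\Cc$. Split into sub-cases: if the SE position is also absent, $p_1$ is free; if SE is present but both NE and SW are absent, then the anti-diagonal of $p_1$ is empty and $p_1$ is again free; otherwise, assume WLOG that both the SE position and the NE position lie in $\Cc$ (the SW case is symmetric). The no-$4$-cycle hypothesis applied to the would-be $4$-cycle $p_1, \mathrm{NE}, (r_1, c_1 + 2), \mathrm{SE}$ in $G_\Cc$ forces $(r_1, c_1 + 2) \notin \Cc$, while the NW partner of NE, namely $(r_1 - 2, c_1)$, has level $m - 2$ and is absent as well. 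Hence both main-diagonal partners of the minor at NE are out of $\Cc$, and the minor at NE is free. Thus in every sub-case a free minor at level $r + c = m$ is produced.

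A symmetric argument applied at the maximum level produces a free minor at level $r + c = M$, and since $m < M$ these two free minors are distinct. The routine part is the level-set bookkeeping and the case split; the main obstacle is the middle sub-case, where the no-$4$-cycle hypothesis is precisely what forces a certain ``far diagonal'' position out of $\Cc$ and thereby promotes a neighbor of $p_1$ into a free minor. Once this micro-argument is in hand, the rest is simply symmetry between the minimum and maximum of $r + c$.
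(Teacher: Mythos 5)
Your argument is correct, and it takes a genuinely different route from the paper's. You locate free minors by a global extremal argument: encoding each minor by its position $(r,c)$, you observe that in a chessboard configuration the only possible ``sharing partners'' of a minor are its four diagonal neighbours, that freeness means one of the two diagonally opposite pairs of partners is entirely absent, and that at a position minimizing $r+c$ the NW partner is automatically absent, so the only obstruction to freeness is the simultaneous presence of SE together with NE or SW --- exactly the situation where the no-$4$-cycle hypothesis kills the fourth corner $(r_1,c_1+2)$ (resp.\ $(r_1+2,c_1)$) of the would-be square in $G_\Cc$ and thereby makes the NE (resp.\ SW) neighbour free; the maximum of $r+c$ gives a second, necessarily distinct, free minor. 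I checked the case analysis (including the degenerate case $m=M$, where every minor is free and $|\Cc|\ge 2$ finishes it) and found no gaps; in particular the $4$-cycle you invoke really is a cycle in $G_\Cc$, since the two ``long'' diagonals of your quadruple differ by $(0,\pm2)$ or $(\pm2,0)$ and hence share no vertex. The paper instead assumes there is a non-free minor $\delta$ and follows a maximal zigzag of diagonally-touching minors through $\delta$ in both directions, asserting (with only a figure as justification) that the no-$4$-cycle condition keeps this sequence from branching or closing up, so that by finiteness its two ends are free. That path-following argument is shorter to state but leaves the existence and termination of the sequence, and the freeness of its endpoints, essentially unproved; your extremal argument replaces the global path by two local, fully verifiable computations at the extreme anti-diagonal levels, at the modest cost of the coordinate bookkeeping. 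Both proofs use the $4$-cycle hypothesis for the same underlying geometric fact --- two minors sitting on both ends of a diagonal of a third cannot be completed to a $2\times 2$ block of minors --- but yours isolates that fact cleanly as the single place the hypothesis enters.
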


\begin{proof}
We may assume there is  at least one non-free  $2$-minor in $\Cc$, say  $\delta= ad-bc$.  Since we do not have a cycle of length $4$, there exists a sequence of $2$-minors in $\Cc$ as indicated in Figure~\ref{notall}.  Then the left-most and the right-most 2-minor of this sequence is free.
\end{proof}

\begin{figure}[hbt]
\begin{center}
\psset{unit=0.7cm}
\begin{pspicture}(4.5,0)(4.5,3)
\pspolygon[style=fyp,fillcolor=light](4,0)(4,1)(5,1)(5,0)
\pspolygon[style=fyp,fillcolor=light](3,1)(3,2)(4,2)(4,1)
\pspolygon[style=fyp,fillcolor=light](5,1)(5,2)(6,2)(6,1)
\pspolygon[style=fyp,fillcolor=light](6,0)(6,1)(7,1)(7,0)
\pspolygon[style=fyp,fillcolor=light](7,1)(7,2)(8,2)(8,1)
\pspolygon[style=fyp,fillcolor=light](2,0)(3,0)(3,1)(2,1)
\pspolygon[style=fyp,fillcolor=light](1,1)(2,1)(2,2)(1,2)
\rput(3.7,0.7){a}
\rput(3.8,-0.2){c}
\rput(5.3,0.7){b}
\rput(5.2,-0.2){d}
\rput(8.5,1){$\cdots$}
\rput(9.3,1){$\cdots$}
\rput(0.5,1){$\cdots$}
\rput(-0.3,1){$\cdots$}
\end{pspicture}
\end{center}
\caption{}\label{notall}
\end{figure}

\begin{proof}[Proof of Theorem \ref{prime}]
(a)$\Rightarrow$ (b): Let $\delta,\gamma\in I$ be two adjacent $2$-minors which have an edge in common. Say,  $\delta=ae-bd$ and $\gamma=bf-ce$. Then $b(af-cd)\in I$, but neither $b$ nor $af-cd$ belongs to $I$. Therefore $I$ must be a chessboard ideal. Suppose $G_\Cc$ contains a cycle of length $4$.  Then there exist in $I$ adjacent two minors  $\delta_1=ae-bd$,  $\delta_2=ej-fi$, $\delta_3=hl-ik$ and $\delta_4=ch-dg$. Then $h(bcjk - afgl)\in I$, but neither $h$ nor $bcjk - afgl$ belongs to $I$.

(b)$\Rightarrow$ (a):  By virtue of Lemma \ref{regular} what we must prove is that all variables $x_{ij}$ are nonzero divisors of $S/I$. Let $\Gc$ be the set of generating adjacent $2$-minors of $I$. We may assume that $|\Gc|\geq 2$.
Fix an arbitrary vertex $x_{ij}$.
We claim that for each of the minors in $\Gc$ we may mark one of the monomials in the support as a potential initial monomial such that
the variable $x_{ij}$ appears in none of the potential initial monomials and that any two potential initial monomials are relatively prime.

Lemma \ref{notallispossible} says that there exist at least two a free adjacent $2$-minors in $\Gc$. Let  $\delta=ad-bc$ be one of them and assume that  $a$ and $d$ are free vertices of $\delta$.
We may assume that $x_{ij} \neq a$ and $x_{ij} \neq d$.  Let ${\Gc}^\prime = \Gc \setminus \{ \delta \}$.  By assumption of induction,
for each of the minors of ${\Gc}^\prime$ we may mark one of the monomials in the support as a potential initial monomial such that
the variable $x_{ij}$ appears in none of the potential initial monomials and that any two potential initial monomials are relatively prime.
Then these markings together with the marking $ad$ are the desired markings of the elements of $\Gc$.

According to Lemma~\ref{choice} there exists an ordering of the variables such that with respect to the lexicographic order induced by this ordering the potential initial monomials become the initial monomials. Since initial monomials are relatively prime, it follows that $\Gc$ is a Gr\"obner basis of $I$, and since $x_{ij}$ does not divide any initial monomial of an element in $\Gc$ it follows that $x_{ij}$ that $x_{ij}$ is a nonzero divisor of $S/\ini(I)$. But then $x_{ij}$ is a nonzero divisor of $S/I$ as well.
\end{proof}

\section{Ideals generated by adjacent $2$-minors with quadratic Gr\"obner basis}

A configuration $\Pc$ of adjacent $2$-minors is called a {\em path}, if there exists ordering $\delta_1,\ldots,\delta_r$ of the elements of $\Pc$ such that for all \[
\delta_j\sect \delta_i\subset \delta_{i-1}\sect \delta_i\quad \text{for all} \quad j<i, \quad \text{and}\quad  \delta_{i-1}\sect \delta_i \quad\text{is an edge of $\delta_i$} .
\]
A path $\Pc$ with path ordering $\delta_1,\ldots,\delta_r$  where  $\delta_i=[a_i,a_i+1|b_i,b_i+1]$ for $i=1,\ldots,r$ is called {\em monotone}, if  the sequences of integers $a_1,\cdots, a_r$ and $b_1,\ldots, b_r$ are monotone sequences. The monotone path $\Pc$ is called {\em monotone increasing (decreasing)}  if the  sequence $b_1,\ldots, b_r$ is increasing (decreasing). We define the {\em end points} of $\Pc$ to be $(a_1,b_1)$ and $(a_r+1, b_r+1)$ if $\Pc$ is monotone increasing, and to be $(a_1,b_1+1)$ and $(a_r+1,b_r)$ if $\Pc$ is monotone decreasing. If for $\Pc$ we have $a_1=a_2=\cdots=a_r$, or $b_1=b_2=\cdots =b_r$, then we call $\Pc$ a {\em line path}. Notice that a line graph is both monotone increasing and monotone decreasing.

Let  $\delta =ad-bc$  be  an adjacent 2-minor as shown in Figure~\ref{antidiagonal}. Then we call the monomial $ad$ the {\em diagonal} of $\delta$ and the monomial  $bc$ the {\em anti-diagonal} of $\delta$.

\begin{figure}[hbt]
\begin{center}
\psset{unit=0.7cm}
\begin{pspicture}(4.5,0)(4.5,1.5)
\pspolygon[style=fyp,fillcolor=light](4,0)(4,1)(5,1)(5,0)
\rput(3.7,1){a}
\rput(3.7,0){c}
\rput(5.3,1){b}
\rput(5.3,0){d}
\end{pspicture}
\end{center}
\caption{}\label{antidiagonal}
\end{figure}

\begin{Lemma}
\label{diagonal}
Let $\Pc$ be a monotone increasing (decreasing) path of 2-minors.   Then for any monomial order $<$ for which $I(\Pc)$ has a quadratic Gr\"obner basis,  the initial monomials of the generators are all diagonals (anti-diagonals).
\end{Lemma}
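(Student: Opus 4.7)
My plan is to argue by contradiction, centering on an S-polynomial calculation for pairs of consecutive generators. Suppose that under the order $<$ giving a quadratic Gr\"obner basis of $I(\Pc)$, two consecutive generators $\delta_i, \delta_{i+1}$ in the path ordering have initial monomials of opposite geometric type---say $\ini(\delta_i)$ is the diagonal of $\delta_i$ while $\ini(\delta_{i+1})$ is the anti-diagonal of $\delta_{i+1}$ (the symmetric case, and the shared-bottom-edge case, are analogous). Since $\delta_i$ and $\delta_{i+1}$ share an edge $e$ of $\delta_{i+1}$, one checks directly that $\ini(\delta_i)$ and $\ini(\delta_{i+1})$ share exactly one variable, namely the vertex of $e$ lying in both marked monomials. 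In coordinates with $\delta_i = [a, a+1 | b, b+1]$ and $\delta_{i+1} = [a, a+1 | b+1, b+2]$, a direct computation gives
\[
S(\delta_i, \delta_{i+1}) \;=\; \pm\, x_v \cdot \mu,
\]
where $x_v$ is the \emph{other} vertex of $e$ (the one not in either initial monomial) and $\mu := [a, a+1 | b, b+2]$ is the non-adjacent $2$-minor on the two outer columns spanned by $\delta_i \cup \delta_{i+1}$.

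Because $I(\Pc)$ is generated only by adjacent $2$-minors from $\Pc$, we have $\mu \notin I(\Pc)$, so $S(\delta_i, \delta_{i+1})$ is a nonzero cubic element of $I(\Pc)$ and its leading monomial $m$ under $<$ is of degree $3$. The next step is to show that $m$ cannot be reduced by any quadratic element of $I(\Pc)$: the $K$-vector space of quadratic elements of $I(\Pc)$ is spanned by the generators $\delta_1, \ldots, \delta_r$, hence every degree-$2$ monomial in $\ini(I(\Pc))$ is itself the initial monomial of some $\delta_j$, and a short case check shows that none of these divides either monomial of $x_v \cdot \mu$. Thus $\ini(I(\Pc))$ would have to contain a degree-$3$ generator, contradicting the quadratic Gr\"obner basis hypothesis. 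Consequently consecutive generators of $\Pc$ always have same-type initial monomials, and by transitivity along the path all initials are of one common type.

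To conclude, I would rule out the all-anti-diagonals case using the monotone increasing hypothesis. Marking the diagonals of all $\delta_i$ yields, via Lemma~\ref{choice}, a coherent lex order realising them as initial monomials. Conversely, if $\Pc$ is monotone increasing but not a line path, marking the anti-diagonals throughout imposes pairwise constraints on the row/column orderings supplied by Lemma~\ref{choice} that, at every inner corner of the staircase (a vertex belonging to two non-consecutive minors of $\Pc$ on the same grid diagonal), conflict with the monotonicity of the sequences $(a_i)$ and $(b_i)$. This produces a cycle in the partial order on indices demanded by Lemma~\ref{choice}, so no total order can realise the anti-diagonals as initial monomials; hence the initials of the generators in the given quadratic Gr\"obner basis must be the diagonals.

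The step I expect to be the main obstacle is the last one: exhibiting explicitly the cycle in the variable ordering forced by the all-anti-diagonals marking on a monotone increasing, non-line path. Carrying this out amounts to a careful case analysis of the right-move and down-move segments of $\Pc$ combined with the monotonicity of $(a_i)$ and $(b_i)$, verifying that in each possible inner-corner configuration the anti-diagonal constraints point opposite ways and thus close into a cycle.
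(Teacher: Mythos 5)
Your first step is sound and coincides with the paper's: for two edge-sharing minors whose initial terms are of opposite type, the initial monomials share exactly one variable, the S-polynomial equals $\pm x_v\cdot[a,a+1\,|\,b,b+2]$ (resp.\ the transposed version), and since the degree-two part of $\ini(I(\Pc))$ consists exactly of the initial monomials of the generators, neither term of this cubic reduces; hence all initial monomials are of one common type.

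The gap is in your concluding step. You propose to rule out one of the two uniform markings by showing that it forces a cycle in the variable ordering required by Lemma~\ref{choice}, so that no total order realises it. No such cycle exists: Lemma~\ref{choice} asserts precisely that \emph{every} marking of one monomial per generator is realised by some lexicographic order, and its proof shows why the constraints are always acyclic (order the rows so that row $a$ dominates row $a+1$; then the marking of a minor in rows $a,a+1$ constrains only the relative order of its two row-$a$ variables, and such consecutive-pair constraints along a row always admit a compatible total order). For instance, for the corner $\delta_1=[1,2|1,2]$, $\delta_2=[2,3|1,2]$, $\delta_3=[2,3|2,3]$, making $x_{12},x_{31},x_{23}$ large realises the all-anti-diagonal marking, and making $x_{11},x_{21},x_{22}$ dominant realises the all-diagonal one. (Moreover the Lemma quantifies over arbitrary monomial orders, so an obstruction phrased only through lex orders would not suffice even if it existed.) The mechanism that actually pins down the type is a second S-polynomial computation, applied at each inner corner to the two minors $\delta_{i-1},\delta_{i+1}$ that meet in exactly one vertex $v$: if both of their (same-type) initial monomials contain $x_v$, say $\delta_{i-1}=x_vx_u-x_px_q$ and $\delta_{i+1}=x_vx_{u'}-x_{p'}x_{q'}$, then $S(\delta_{i-1},\delta_{i+1})=x_ux_{p'}x_{q'}-x_{u'}x_px_q$ is again a cubic binomial whose terms are divisible by no quadratic initial monomial. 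Hence at every corner the initial monomials must be the ones \emph{avoiding} the shared vertex; for a monotone path all corners have the same chirality and force the same type, which then propagates along the whole path by your first step. This is exactly what the paper does with the two corner subpaths of Figure~\ref{subpath}, and it is the piece your argument is missing.
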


\begin{proof}
Suppose first that $\Pc$ is a line path. If $I(\Pc)$ has a quadratic Gr\"obner basis,  then initial monomials of the $2$-minors of $\Pc$ are all diagonals or all anti-diagonals, because otherwise  there would be two 2-minors $\delta_1$ and $\delta_2$ in $\Pc$ connected by an edge such that $\ini(\delta_1)$ is a diagonal and $\ini(\delta_2)$ is an anti-diagonal. The $S$-polynomial of $\delta_1$ and $\delta_2$  is a binomial of degree 3 which belongs to the reduced Gr\"obner basis of $I$, a contradiction. If all initial monomials of the 2-minors in $\Pc$ are diagonals, we interpret $\Pc$ as a monotone increasing path, and  if all initial monomials of the 2-minors in $\Pc$ are anti-diagonals, we interpret $\Pc$ as a monotone decreasing path.

Now assume that $\Pc$ is not a line path. We may assume that $\Pc$ is monotone increasing. (The argument for a monotone decreasing path is similar). Then, since $\Pc$ is not a line path it contains one of the following sub-paths displayed in Figure~\ref{subpath}.

\begin{figure}[hbt]
\begin{center}
\psset{unit=0.7cm}
\begin{pspicture}(4.5,-1)(4.5,2)
\pspolygon[style=fyp,fillcolor=light](1,0)(1,1)(2,1)(2,0)
\pspolygon[style=fyp,fillcolor=light](2,0)(2,1)(3,1)(3,0)
\pspolygon[style=fyp,fillcolor=light](1,1)(1,2)(2,2)(2,1)
\pspolygon[style=fyp,fillcolor=light](6,0)(6,1)(7,1)(7,0)
\pspolygon[style=fyp,fillcolor=light](6,1)(6,2)(7,2)(7,1)
\pspolygon[style=fyp,fillcolor=light](7,1)(7,2)(8,2)(8,1)
\end{pspicture}
\end{center}
\caption{}\label{subpath}
\end{figure}

For both sub-paths the initial monomials must be diagonals, otherwise $I(\Pc)$ would not have a quadratic Gr\"obner basis. Then as in the case of line paths one sees that all the other initial monomials of $\Pc$ must be diagonals.
\end{proof}

A configuration of adjacent $2$-minors which are of the form shown in Figure~\ref{pinsaddle},   or are obtained by rotation from them,  are called {\em square}, {\em pin} and {\em saddle}, respectively.

\begin{figure}[hbt]
\begin{center}
\psset{unit=0.7cm}
\begin{pspicture}(5.5,-1)(5.5,4)
\pspolygon[style=fyp,fillcolor=light](-3,0)(-3,1)(-2,1)(-2,0)
\pspolygon[style=fyp,fillcolor=light](-2,0)(-2,1)(-1,1)(-1,0)
\pspolygon[style=fyp,fillcolor=light](-3,1)(-3,2)(-2,2)(-2,1)
\pspolygon[style=fyp,fillcolor=light](-2,1)(-2,2)(-1,2)(-1,1)
\rput(-2, -0.8){Square}
\pspolygon[style=fyp,fillcolor=light](2.8,0)(2.8,1)(3.8,1)(3.8,0)
\pspolygon[style=fyp,fillcolor=light](3.8,0)(3.8,1)(4.8,1)(4.8,0)
\pspolygon[style=fyp,fillcolor=light](4.8,0)(4.8,1)(5.8,1)(5.8,0)
\pspolygon[style=fyp,fillcolor=light](3.8,1)(3.8,2)(4.8,2)(4.8,1)
\rput(4.2, -0.8){Pin}
\pspolygon[style=fyp,fillcolor=light](9,0)(9,1)(10,1)(10,0)
\pspolygon[style=fyp,fillcolor=light](10,0)(10,1)(11,1)(11,0)
\rput(11.6,0.9){$\cdots$}\rput(12.4,0.9){$\cdots$}
\rput(11.6,0.1){$\cdots$}\rput(12.4,0.1){$\cdots$}
\pspolygon[style=fyp,fillcolor=light](13,0)(13,1)(14,1)(14,0)
\pspolygon[style=fyp,fillcolor=light](14,0)(14,1)(15,1)(15,0)
\pspolygon[style=fyp,fillcolor=light](9,1)(9,2)(10,2)(10,1)
\pspolygon[style=fyp,fillcolor=light](14,1)(14,2)(15,2)(15,1)
\rput(12, -0.8){Saddle}
\end{pspicture}
\end{center}
\caption{}\label{pinsaddle}
\end{figure}

\begin{Lemma}
\label{obvious}
Let $\Ac$ be a connected configuration of adjacent $2$-minors. Then $\Ac$ is a monotone path if and only if $\Ac$  contains  neither a square nor a pin nor a saddle.
\end{Lemma}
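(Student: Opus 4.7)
The plan is to analyze both directions through the \emph{edge-adjacency graph} on $\Ac$, whose vertices are the minors of $\Ac$ and whose edges record two minors sharing a common edge (in the sense of Section~1). The three forbidden configurations translate cleanly: a pin is a vertex of degree $\geq 3$; a square is a $4$-cycle (whose four minors automatically form a $2\times 2$ block, since that is the only way four unit steps can close a simple loop in the grid); and a saddle is a ``U-turn'' --- a horizontal run of minors with vertical extensions at its two ends, or any $90^\circ$ rotation thereof.

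For the direction ($\Rightarrow$), assume $\Ac$ is a monotone path with ordering $\delta_1,\ldots,\delta_r$ and $\delta_i=[a_i,a_i+1|b_i,b_i+1]$. The defining condition $\delta_j\cap\delta_i\subset\delta_{i-1}\cap\delta_i$ forces each $\delta_i$ to share an edge only with $\delta_{i-1}$ and $\delta_{i+1}$, so no pin can sit inside $\Ac$. Monotonicity of $(a_i)$ and $(b_i)$ (together with the fact that consecutive minors differ by $1$ in exactly one coordinate) prevents four distinct minors from arranging themselves into a $2\times 2$ block while appearing in a monotone order, and it also prevents either sequence $(a_i),(b_i)$ from rising and then falling --- hence neither a square nor a saddle can appear.

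For the direction ($\Leftarrow$), assume $\Ac$ is connected and contains none of the three configurations. The absence of a pin gives maximum degree $\leq 2$ in the edge-adjacency graph; combined with connectedness, this graph is a simple path or a simple cycle. I rule out the cycle case by examining the topmost row of minors of the purported cycle: a degree count shows that the two endpoints of this row must have a neighbor immediately below to achieve degree~$2$, so the top row is a contiguous strip of length $k\geq 2$ flanked by two minors directly below its endpoints. When $k=2$ the resulting four minors form a square, and when $k\geq 3$ they form a saddle; either way, contradiction. (A non-contiguous top row forces the same argument on a sub-cycle at the next lower row, by induction on the number of rows spanned by $\Ac$.) Hence the graph is a simple path $\delta_1,\ldots,\delta_r$ with consecutive minors sharing an edge. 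Writing $\delta_i=[a_i,a_i+1|b_i,b_i+1]$, each step changes exactly one of $a_i,b_i$ by $\pm 1$; a sign change in either sequence would create a U-turn whose minors together with the intervening straight segment form a saddle --- contradiction. So both sequences are monotone, and a direct coordinate check shows that this ordering also satisfies the path condition $\delta_j\cap\delta_i\subset\delta_{i-1}\cap\delta_i$ for all $j<i-1$.

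The one place that calls for care is the cycle elimination --- specifically, the assertion that every cycle of length $\geq 6$ in the edge-adjacency graph exhibits a saddle via its topmost row. The contiguous-top-row case is clean, but a topmost row with interior gaps requires an inductive descent to the next lower row, and the bookkeeping of that descent is the only non-routine step I anticipate.
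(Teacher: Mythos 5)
Your argument is correct, but it takes a genuinely different route from the paper's. The paper proves both implications by induction on the number $r$ of minors: for ($\Rightarrow$) it confines $\delta_1,\ldots,\delta_{r-1}$ to a bounding rectangle and checks that attaching $\delta_r$ cannot create a forbidden configuration without pushing an earlier minor outside that rectangle; for ($\Leftarrow$) it deletes the last minor, invokes the inductive hypothesis, and does a case analysis on which $\delta_i$ the deleted minor re-attaches to. Your global reformulation via the edge-adjacency graph is cleaner and avoids that case analysis: the dictionary pin $\leftrightarrow$ vertex of degree $\geq 3$, square $\leftrightarrow$ $4$-cycle, saddle $\leftrightarrow$ U-turn is exactly right, and the observation that along a self-avoiding edge-path the first sign change in a coordinate sequence produces a square (intervening straight run of length $2$) or a saddle (length $\geq 3$) correctly settles monotonicity. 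One remark on the step you flag as delicate: the inductive descent through rows is unnecessary. In a cycle every minor has degree exactly $2$ and every adjacency is a cycle edge, so the \emph{leftmost} minor of the topmost row has its two neighbours to the right and below; following the maximal rightward run from it, the last minor of that run must also have its second neighbour below, and the run together with these two pendant minors is a square (run of length $2$) or a saddle (run of length $\geq 3$), whether or not the topmost row is contiguous. What your approach buys is a proof that never needs to know that removing an end minor of $\Ac$ leaves a connected configuration (a point the paper's induction passes over silently); what the paper's induction buys is the explicit bounding-rectangle picture of a monotone path.
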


\begin{proof} Assume that $\Ac=\delta_1,\delta_2,\ldots,\delta_r$  with $\delta_i= [a_i,a_i+1|b_i,b_i+1]$ for $i=1,\ldots,r$ is a monotone path. Without loss of generality we may assume the both sequences $a_1,\cdots, a_r$ and $b_1,\ldots, b_r$  are monotone increasing. We will show by induction on $r$ that it contains no square, no pin and no saddle. For $r=1$ the statement is obvious. Now let us assume that the assertion is true for $r-1$. Since $\Ac'=\delta_1,\delta_2,\cdots,\delta_{r-1}$ is monotone increasing it follows that the coordinates of the minors $\delta_i$ for $i=1,\ldots,r-1$ sit inside the rectangle $R$ with corners  $(a_1,b_1), (a_{r-1}+1,b_1), (a_{r-1}+1,b_{r-1}+1), (a_1,b_{r-1}+1)$, and $\Ac'$ has no square, no pin and no saddle. Since $\Ac$ is monotone increasing, $\delta_r=[a_{r-1},a_{r-1}+1| b_{r-1}+1,b_{r-1}+2]$ or $\delta_r=[a_{r-1}+1,a_{r-1}+2| b_{r-1},b_{r-1}+1]$. It follows that if  $\Ac$ would contain
a square, a pin or a saddle, then the coordinates of one the minors $\delta_i$, $i=1,\ldots,r-1$ would not  be inside the rectangle $R$.

Conversely suppose that $\Ac$  contains   no square, no pin and no saddle. Then $\Ac'$ contains  no square, no pin and no saddle as well. Thus arguing by induction  on $r$, we may assume that $\Ac'$ is a monotone path. Without loss of generality we may even assume that $a_1\leq a_2\leq \cdots\leq a_{r-1}$ and $b_1\leq b_2\leq\cdots \leq b_{r-1}$. Now  let $\delta_r$  be connected to $\delta_i$ (via an edge). If $i\in \{2,\ldots,r-2\}$, then $\Ac$ contains a  square, a pin or a saddle which involves $\delta_r$, a contradiction. If $i=1$ or $i=r-1$,  and  $\Ac$ is not monotone, then contains $\Ac$ contains a square or a saddle involving $\delta_r$.
\end{proof}

With the notation introduced we have

\begin{Theorem}
\label{quadratic}
Let  $\Cc$ be a configuration of adjacent $2$-minors. Then the following conditions are equivalent:
\begin{enumerate}
\item[{\em (a)}] $I(\Cc)$ has a quadratic Gr\"obner basis with respect to the lexicographic order induced by a suitable order of the variables.
\item[{\em (b)}]
\begin{enumerate}
\item[{\em (i)}] Each connected component of $\Cc$ is a monotone path.
\item[{\em (ii)}] If $\Ac$ and $\Bc$ are components of $\Cc$ which meet in a vertex which is not  an end point  of $\Ac$ or not and end point  of $\Bc$, and if $\Ac$ is monotone increasing, then $\Bc$ must be monotone decreasing,  and vice versa.
\end{enumerate}
\item[{\em (c)}] The initial ideal of $I(\Cc)$ with respect to  the lexicographic order induced by a suitable order of the variables is a complete intersection.
\end{enumerate}
\end{Theorem}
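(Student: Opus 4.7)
The plan is the cyclic sequence $(c) \Rightarrow (a) \Rightarrow (b) \Rightarrow (c)$. The implication $(c) \Rightarrow (a)$ is immediate, since a complete intersection initial ideal means the initial monomials of the $2$-minor generators are pairwise coprime quadrics, and Buchberger's criterion gives directly that these generators form a quadratic Gröbner basis.

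For $(a) \Rightarrow (b)$, assume $I(\Cc)$ has a quadratic Gröbner basis with respect to some lexicographic order. To prove (i), by Lemma~\ref{obvious} it suffices to show no component of $\Cc$ contains a square, a pin, or a saddle. In each of these three cases I would enumerate the finitely many markings of ``diagonal versus anti-diagonal'' on the minors of the configuration and verify, case by case, that some S-polynomial is a cubic not reducible by any available quadratic initial monomial, contradicting quadraticity. The square is the prototypical obstruction: its central vertex is forced to appear in at least two initial monomials regardless of marking, and the resulting S-polynomial never simplifies. To prove (ii), take two components $\Ac$ and $\Bc$ meeting at a vertex $v$ that is not a mutual endpoint, so $v$ is interior to at least one, say $\Ac$. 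By Lemma~\ref{diagonal}, with an appropriate orientation of $\Ac$ as a monotone path all initial monomials of $\Ac$ are diagonals (say $\Ac$ is monotone increasing). A local S-polynomial analysis at $v$ then forces the minor of $\Bc$ containing $v$ to be marked by its anti-diagonal, so $\Bc$ must be oriented as monotone decreasing.

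For $(b) \Rightarrow (c)$, I use the orientations supplied by (b) to mark each minor of $\Cc$ with the diagonal if its component is monotone increasing and with the anti-diagonal if it is monotone decreasing. The key is that these markings are pairwise coprime. Within a single monotone path, two marked monomials can share a variable only through a coincidence of corners that is excluded by the monotonicity of $(a_i, b_i)$ combined with the direction-matching of diagonals (for increasing) or anti-diagonals (for decreasing). Across two components meeting at a vertex $v$, the structure of (ii) precisely matches what is needed for coprimality: if $v$ is interior to at least one side, the forced opposite orientation places $v$ on the marked monomial of at most one of the two components, because at an interior vertex of a monotone path only one of the two adjacent minors has $v$ on the direction-matching corner; and if $v$ is a mutual endpoint, the definition of endpoints (diagonal corners for monotone increasing, anti-diagonal corners for monotone decreasing) combined with the choice of orientation makes at least one side's marked monomial avoid $v$. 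Having pairwise coprime quadratic markings, Lemma~\ref{choice} supplies a lex order in which these are the actual initial monomials, so the initial ideal is generated by pairwise coprime squarefree quadrics, i.e., a complete intersection.

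The main obstacle will be the case analysis at shared vertices between two components, required in both $(a) \Rightarrow (b)$ and $(b) \Rightarrow (c)$: one must track the precise corner of each $2$-minor the shared vertex occupies and verify that the opposite-orientation rule of (ii) together with the definition of endpoints produces the required coprimality. A subtle point is that endpoints are intrinsically tied to the orientation (diagonal corners for monotone increasing, anti-diagonal corners for monotone decreasing), so the ``endpoint'' clause in (ii) and the orientation choices in (b) must be handled together, not separately.
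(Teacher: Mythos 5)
Your proposal follows essentially the same route as the paper: the same cycle of implications, the same reduction of (a)$\Rightarrow$(b)(i) to squares, pins and saddles via Lemma~\ref{obvious}, the same use of Lemma~\ref{diagonal} for (b)(ii), and the same construction for (b)$\Rightarrow$(c) via coprime markings realized by Lemma~\ref{choice} (the paper is terser here; your explicit coprimality discussion fills in detail it omits). One factual slip: your claim that the central vertex of a square ``is forced to appear in at least two initial monomials regardless of marking'' is false --- labelling the square's vertices $a,b,c/d,e,f/g,h,i$, the marking $bd,\,bf,\,dh,\,fh$ avoids the centre $e$ entirely. The square is still an obstruction, but because the markings must collide somewhere on the boundary; the paper handles all three configurations uniformly by showing that any marking contains, up to rotation and reflection, one of the two local patterns of Figure~\ref{leadingterm}, whose $S$-polynomials are cubic binomials not reducible by any quadratic leading term. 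Since your stated strategy is exhaustive enumeration of markings, this error would be caught and corrected in carrying it out, so it does not invalidate the approach.
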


\begin{proof}
(a) $\Rightarrow$ (b): (i) Suppose there is component $\Ac$ of $\Cc$ which is not a monotone path. Then, according to Lemma~\ref{obvious}, $\Ac$ contains a square, a pin or a saddle. In all three cases, no matter how we label  the vertices of the component $\Ac$, it will contain, up to a rotation or reflection,  two adjacent $2$-minors with leading terms as indicated in  Figure~\ref{leadingterm}.

\begin{figure}[hbt]
\begin{center}
\psset{unit=0.7cm}
\begin{pspicture}(4.5,-1)(4.5,2)
\pspolygon[style=fyp,fillcolor=light](1,0)(1,1)(2,1)(2,0)
\pspolygon[style=fyp,fillcolor=light](2,0)(2,1)(3,1)(3,0)
\psline(1,1)(2,0)
\psline(2,0)(3,1)
\pspolygon[style=fyp,fillcolor=light](6,0)(6,1)(7,1)(7,0)
\pspolygon[style=fyp,fillcolor=light](7,1)(7,2)(8,2)(8,1)
\psline(6,0)(7,1)
\psline(7,1)(8,2)
\rput(1,1.3){a}
\rput(2,1.3){b}
\rput(3,1.3){c}
\rput(1,-0.3){d}
\rput(2,-0.3){e}
\rput(3,-0.3){f}
\rput(7,2.3){a}
\rput(8,2.3){b}
\rput(5.7,1){c}
\rput(6.7,1.3){d}
\rput(8.2,1){e}
\rput(6,-0.3){f}
\rput(7,-0.3){g}
\end{pspicture}
\end{center}
\caption{}\label{leadingterm}
\end{figure}

In the first case the $S$-polynomial of the two minors is   $abf-bcd$ and in the second case it is $aef-bcg$. We claim that in both cases these binomials belong to the reduced Gr\"obner basis of $I(\Cc)$, which contradicts our assumption (a).

Indeed, first observe that the adjacent 2-minors generating the ideal $I(\Cc)$ is the unique minimal set of binomials generating $I(\Cc)$. Therefore, the initial monomials of degree 2 are exactly the initial monomials of these binomials. Suppose now that  $abf-bcd$ does not belong to the reduced Gr\"obner basis of $I$, then one of the monomials $ab$, $af$ or $bf$ must be the leading of an adjacent 2-minor, which is impossible. In the same way one argues in the second case.

(ii) Assume $\Ac$ and   $\Bc$ have  a vertex $c$ in common. Then $c$ must be a corner of $\Ac$ and $\Bc$, that is, a vertex which belongs to exactly one $2$-minor of $\Ac$ and exactly one $2$-minor of $\Bc$, see Figure~\ref{commoncorner}.

\begin{figure}[hbt]
\begin{center}
\psset{unit=0.7cm}
\begin{pspicture}(4.5,0)(4.5,4)
\pspolygon[style=fyp,fillcolor=light](3.5,2)(4.5,2)(4.5,3)(3.5,3)
\pspolygon[style=fyp,fillcolor=light](3.5,3)(4.5,3)(4.5,4)(3.5,4)
\pspolygon[style=fyp,fillcolor=light](2.5,2)(3.5,2)(3.5,3)(2.5,3)
\pspolygon[style=fyp,fillcolor=light](5.5,0)(5.5,1)(6.5,1)(6.5,0)
\pspolygon[style=fyp,fillcolor=light](4.5,1)(4.5,2)(5.5,2)(5.5,1)
\pspolygon[style=fyp,fillcolor=light](5.5,1)(5.5,2)(6.5,2)(6.5,1)
\rput(4.8,2.3){c}
\end{pspicture}
\end{center}
\caption{}\label{commoncorner}
\end{figure}

If for both components of  the initial monomials are the diagonals (anti-diagonals), then the $S$-polynomial of the 2-minor in $\Ac$ with vertex $c$ and the $2$-minor of $\Bc$ with vertex $c$ is a binomial of degree three whose initial monomial is not divisible by any initial monomial of the generators of $\Cc$, unless $c$ is an end point of both $\Ac$ and $\Bc$. Thus the desired conclusion follows from  Lemma~\ref{diagonal}.

(b) $\Rightarrow$  (c): The condition (b) implies that any pair of  initial monomials of two distinct binomial generators of $I(\Cc)$ are relatively prime. Hence the initial ideal is a complete intersection.

(c)  $\Rightarrow$ (d): Since the initial monomial of the 2-minors generating $I(\Cc)$ belong to any reduced Gr\"obner basis of $I(\Cc)$, they must form a regular sequence. This implies that $S$-polynomials of any two generating 2-minors of $I(\Cc)$ reduce to $0$. Therefore $I(\Cc)$ has a quadratic Gr\"obner basis.
\end{proof}

\begin{Corollary}
\label{radicalideal}
Let $\Cc$ be a configuration  satisfying the  conditions of Theorem~\ref{quadratic}(b). Then $I(\Cc)$  is a radical ideal generated by a regular sequence.
\end{Corollary}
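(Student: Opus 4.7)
My plan is to deduce the corollary directly from Theorem~\ref{quadratic} together with two standard principles from Gr\"obner-basis theory. The equivalence of conditions (a), (b), and (c) in Theorem~\ref{quadratic} supplies a lexicographic order with respect to which the adjacent $2$-minors $\delta_1,\ldots,\delta_r$ generating $I(\Cc)$ already form a Gr\"obner basis, and with respect to which the initial ideal $\ini(I(\Cc))=(\ini(\delta_1),\ldots,\ini(\delta_r))$ is a complete intersection. Moreover, as recorded in the proof of that theorem, the monomials $\ini(\delta_i)$ are pairwise coprime squarefree monomials of degree $2$ (each is the diagonal or the anti-diagonal of an adjacent $2$-minor).

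For the radicality statement, I would invoke the standard principle that if $\ini(I)$ is a squarefree monomial ideal for some term order, then $I$ is itself a radical ideal. Since $\ini(I(\Cc))$ is generated by pairwise coprime squarefree degree-$2$ monomials, it is a squarefree monomial ideal, and hence $I(\Cc)$ is radical.

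For the regular-sequence statement, I would argue as follows. Monomials with pairwise disjoint supports form a regular sequence in $S$; hence $\ini(\delta_1),\ldots,\ini(\delta_r)$ is a regular sequence, and so $\height(\ini(I(\Cc)))=r$. Because $I(\Cc)$ and its initial ideal share the same Hilbert function, and therefore the same Krull dimension, $\height(I(\Cc))=r$ as well. As $S$ is Cohen--Macaulay and $I(\Cc)$ is generated by the $r$ elements $\delta_1,\ldots,\delta_r$ with $\height(I(\Cc))=r$, these generators automatically form a regular sequence.

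No genuine obstacle remains once Theorem~\ref{quadratic} is in hand; the argument simply combines the principle ``squarefree initial ideal $\Rightarrow$ radical ideal'' with the principle ``number of generators equals height in a Cohen--Macaulay ring $\Rightarrow$ regular sequence''.
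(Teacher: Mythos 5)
Your argument is correct and follows essentially the same route as the paper: both deduce radicality from the squarefreeness of the initial ideal and the regular-sequence property from the fact that the initial ideal is a complete intersection generated by pairwise coprime monomials. The only difference is the justification of the regular-sequence step: the paper invokes the standard fact that a sequence whose initial forms constitute a regular sequence is itself a regular sequence, whereas you route through $\height(I(\Cc))=\height(\ini(I(\Cc)))=r$ (via equality of Hilbert functions) together with the Cohen--Macaulayness of $S$; both justifications are valid.
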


\begin{proof} Let $\Cc=\delta_1,\ldots,\delta_r$. By Theorem~\ref{quadratic} there exist a monomial order $<$ such that $\ini_<(\delta_1),\ldots, \ini_<(\delta_r)$ is a regular sequence. It follows that $\delta_1,\ldots,\delta_r$  is a regular sequence. Since the initial monomials are squarefree and form a Gr\"obner basis of $I(\Cc)$ it follows that $I(\Cc)$ is a radical ideal, see for example \cite[Proof of Corollary 2.2]{HH}.
\end{proof}

To demonstrate Theorem~\ref{quadratic} we consider the following two examples displayed in Figure~\ref{two}

\begin{figure}[hbt]
\begin{center}
\psset{unit=0.7cm}
\begin{pspicture}(4.5,0)(4.5,4)
\pspolygon[style=fyp,fillcolor=light](7,2)(8,2)(8,3)(7,3)
\pspolygon[style=fyp,fillcolor=light](7,3)(8,3)(8,4)(7,4)
\pspolygon[style=fyp,fillcolor=light](6,2)(7,2)(7,3)(6,3)
\pspolygon[style=fyp,fillcolor=light](8,0)(8,1)(9,1)(9,0)
\pspolygon[style=fyp,fillcolor=light](8,1)(8,2)(9,2)(9,1)
\pspolygon[style=fyp,fillcolor=light](9,1)(9,2)(10,2)(10,1)
\rput(5.5,3){$\Ac$}
\rput(10.5,1){$\Bc$}

\pspolygon[style=fyp,fillcolor=light](-1,1)(0,1)(0,2)(-1,2)
\pspolygon[style=fyp,fillcolor=light](-1,0)(0,0)(0,1)(-1,1)
\pspolygon[style=fyp,fillcolor=light](-1,3)(0,3)(0,4)(-1,4)
\pspolygon[style=fyp,fillcolor=light](0,1)(1,1)(1,2)(0,2)

\pspolygon[style=fyp,fillcolor=light](-2,2)(-1,2)(-1,3)(-2,3)

\pspolygon[style=fyp,fillcolor=light](-2,3)(-1,3)(-1,4)(-2,4)
\rput(-2.5,3){$\Ac$}
\rput(1.5,1){$\Bc$}
\end{pspicture}
\end{center}
\caption{}\label{two}
\end{figure}

In both   examples the component $\Ac$ and the component $\Bc$ are monotone increasing paths. In the first example  $\Ac$ and $\Bc$ meet in a vertex which is an end point of $\Ac$, therefore condition (b)(ii) of Theorem~\ref{quadratic} is satisfied, and the ideal $I(\Ac\union \Bc)$ has a quadratic Gr\"obner basis. However in the second example  $\Ac$ and $\Bc$  meet in a vertex which is not and end point of $\Ac$ and not and end point of $\Bc$. Therefore condition (b)(ii) of Theorem~\ref{quadratic} is not satisfied, and the ideal $I(\Ac\union \Bc)$ does not have  a quadratic Gr\"obner basis for the lexicographic  order induced by any order of the variables.

\section{Minimal prime ideals of special configurations of adjacent $2$-minors}

Let $\Cc$ be a connected configuration of adjacent 2-minors.  In this section we want to describe a primary decomposition of $\sqrt{I(\Cc)}$. For this purpose we have to introduce some terminology: let $\Cc=\delta_1,\delta_2,\ldots,\delta_r$ be an arbitrary configuration of adjacent $2$-minors. A subset $W$ of the vertex set of $\Cc$ is called {\em admissible}, if for each index $i$  either $W\sect V(\delta_i)=\emptyset$ or $W\sect V(\delta_i)$ contains an edge of $\delta_i$. For example, the admissible sets of the configuration shown in Figure~\ref{admissible}

\begin{figure}[hbt]
\begin{center}
\psset{unit=0.7cm}
\begin{pspicture}(4.5,-1)(4.5,4)
\pspolygon[style=fyp,fillcolor=light](2.8,0)(2.8,1)(3.8,1)(3.8,0)
\pspolygon[style=fyp,fillcolor=light](3.8,0)(3.8,1)(4.8,1)(4.8,0)
\pspolygon[style=fyp,fillcolor=light](4.8,1)(4.8,2)(5.8,2)(5.8,1)
\pspolygon[style=fyp,fillcolor=light](3.8,1)(3.8,2)(4.8,2)(4.8,1)
\rput(3.8,2.3){a}
\rput(4.8,2.3){b}
\rput(5.8,2.3){c}
\rput(2.6,1.3){d}
\rput(3.5,1.3){e}
\rput(5,0.7){f}
\rput(6.1,1){g}
\rput(2.8,-0.3){h}
\rput(3.8,-0.3){i}
\rput(4.8,-0.3){j}
\end{pspicture}
\end{center}
\caption{}\label{admissible}
\end{figure}
are the following
\[
\emptyset, \{c,g\}, \{d,h\}, \{a,e,i\},\{b,f,j\},\{a,b,c\},\ldots,\{a,b,c,d,e,f,g,h,i,j\}.
\]
Let $W\subset V(\Cc)$ be an admissible set. We define an ideal $P_W(\Cc)$ containing $I(\Cc)$ as follows: the generators of $P_W(\Cc)$ are the variables belonging to $W$ and all $2$-minors $\delta=[a_1,a_2|b_1,b_2]$ (not necessarily adjacent) such that all vertices $(i,j)$ with $a_1\leq i\leq a_2$  and $b_1\leq j\leq b_2$ belong to $V(\Cc)\setminus W$.

Note that  $P_W(\Cc)=(W, P_\emptyset(\Cc'))$ where  $\Cc'=\{\delta\in \Cc\:\, V(\delta)\sect W=\emptyset\}$. We denote by $\Gc(\Cc')$ the set of $2$-minors of $P_\emptyset(\Cc')$ and call it the set of {\em inner $2$-minors}  of $\Cc'$.

For example, if we take the configuration displayed in Figure~\ref{admissible}, then
\begin{eqnarray*}
P_{\emptyset}(\Cc)&=&(af-be, aj-bi, ej-fi, ag-ce, bg-cf, di-eh, dj-fh), \\
P_{\{d,h\}}(\Cc)&=&(d,h,af-be, aj-bi, ej-fi, ag-ce, bg-cf).
\end{eqnarray*}

\begin{Lemma}
\label{primeadmissible}
Let $\Cc$ be a special  configuration of adjacent $2$-minors. Then for any admissible set $W\subset V(\Pc)$ the ideal $P_W(\Cc)$ is a prime ideal.
\end{Lemma}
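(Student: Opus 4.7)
The first step is to eliminate $W$. The identity $P_W(\Cc) = (W) + P_\emptyset(\Cc')$ recorded just before the statement gives an isomorphism
\[
S/P_W(\Cc) \;\cong\; \bigl(K[x_{ij} : (i,j) \in V(\Cc')]/P_\emptyset(\Cc')\bigr)[\,\text{remaining variables of } X\,],
\]
because the binomial generators of $P_\emptyset(\Cc')$ involve only variables in $V(\Cc')\subseteq V(\Cc)\setminus W$. Hence $P_W(\Cc)$ is prime if and only if $P_\emptyset(\Cc')$ is prime, and it suffices to treat the case $W = \emptyset$. Before doing so I would check that $\Cc'$ is again special: if $\delta_1,\delta_2 \in \Cc'$ share a single vertex, specialness of $\Cc$ produces a bridging cell $\delta \in \Cc$, and three of its four vertices already lie in $V(\delta_1)\cup V(\delta_2) \subseteq V(\Cc)\setminus W$; if the fourth were in $W$, admissibility would force $W\cap V(\delta)$ to contain an edge of $\delta$, which is impossible. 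So $\delta\in \Cc'$.

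Since the vertex sets of the connected components of $\Cc'$ are pairwise disjoint, $P_\emptyset(\Cc')$ splits as a sum of ideals in disjoint variable sets, and so is prime iff each summand is. This reduces the problem to showing that $P_\emptyset(\Cc')$ is prime when $\Cc'$ is a connected special configuration. The natural way to do this is by a toric realization. Consider
\[
\phi \colon K[x_{ij} : (i,j)\in V(\Cc')] \longrightarrow K[t_1,\ldots,t_m,s_1,\ldots,s_n], \qquad x_{ij} \mapsto t_i s_j.
\]
Every inner $2$-minor $x_{a_1b_1}x_{a_2b_2}-x_{a_1b_2}x_{a_2b_1}$ maps to zero, so $P_\emptyset(\Cc')\subseteq \ker\phi$. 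The ideal $\ker\phi$ is toric and hence prime, so proving the reverse inclusion $\ker\phi \subseteq P_\emptyset(\Cc')$ completes the argument.

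This reverse inclusion is the main obstacle. A binomial $\xb^{\ab}-\xb^{\bb}\in\ker\phi$ is characterized by the fact that the multisets of row indices and column indices recorded by $\ab$ and $\bb$ coincide. I would proceed by induction on total degree, showing that any such binomial of degree $\geq 2$ can be reduced modulo an inner $2$-minor of $\Cc'$ to a binomial in $\ker\phi$ of strictly smaller degree. The combinatorial input is exactly specialness together with connectedness of $\Cc'$: whenever a pair of variables whose indices one wants to swap correspond to vertices that meet only at a corner, the bridging cell guaranteed by specialness supplies an inner $2$-minor allowing the swap; at row/column-adjacent pairs the inner $2$-minor is directly available; and at distant pairs one chains such local swaps along a path in $\Cc'$, invoking the non-adjacent inner $2$-minors corresponding to the spanned rectangle when useful. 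Carrying this reduction out uniformly, and verifying that the requisite inner $2$-minors are always present in $P_\emptyset(\Cc')$, is the technical heart of the proof.
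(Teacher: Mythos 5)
Your reduction steps match the paper's: you rewrite $P_W(\Cc)$ as $(W)+P_\emptyset(\Cc')$, observe that the generators of $P_\emptyset(\Cc')$ avoid the variables in $W$, and verify that $\Cc'$ is again special by the same three-vertex count the paper uses (two shared edges of the bridging cell overlap in the common corner, so only one vertex of $\delta$ could possibly lie in $W$, and admissibility rules that out). Up to that point the argument is correct. The divergence --- and the gap --- is in how primeness of $P_\emptyset(\Cc')$ is then established. The paper does not prove this at all: it invokes the theorem of Qureshi \cite{Q} that the ideal of inner $2$-minors of a special configuration is prime. You instead set up the Segre-type map $\phi$, note $P_\emptyset(\Cc')\subseteq\ker\phi$, and then state that the reverse inclusion $\ker\phi\subseteq P_\emptyset(\Cc')$ is ``the technical heart'' --- but you never carry it out. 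That reverse inclusion \emph{is} Qureshi's theorem; it is the entire mathematical content of the lemma once the easy reduction to $W=\emptyset$ is done, and a few lines about ``swapping indices along a path using bridging cells'' do not establish it.

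The reason the sketch cannot stand in for a proof is that the asserted degree-reducing rewriting is exactly what fails for non-special configurations: for a general collection of cells the inner $2$-minors do not generate $\ker\phi$, and $P_\emptyset$ need not be prime. So specialness must enter the induction in an essential, global way, and your outline never isolates what makes the local swaps compose --- which inner $2$-minors of the ``spanned rectangle'' are actually guaranteed to lie in $\Gc(\Cc')$, why the chain of intermediate monomials stays supported on $V(\Cc')$, and why the process terminates. (A minor secondary point: your component-wise reduction implicitly uses that a sum of prime binomial ideals in disjoint variables is prime; this is true here because the quotients are affine semigroup rings, but it deserves a word over a non-algebraically-closed field.) Either cite Qureshi's result, as the paper does, or supply the full combinatorial argument; as written, the proof is incomplete at its central step.
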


\begin{proof}
Let   $P_W(\Cc)=P_W(\Cc)=(W, P_\emptyset(\Cc'))$,
where $\Cc'=\{\delta\in \Cc\:\, V(\delta)\sect W=\emptyset\}$.

Note that $\Cc'$  is again a special configuration of $2$-adjacent minors. Indeed,
let $\delta_1,\delta_2\in\Cc'$ be two adjacent $2$-minors with exactly one common vertex. Since $\Cc$ is special, there exists $\delta\in \Cc$ which has a common edge  with $\delta_1$ and a common edge with $\delta_2$. Since $\delta\not\in \Cc'$, the set $W$ contains an edge of $\delta$. This implies that $W\sect V(\delta_1)\neq \emptyset$ or $W\sect V(\delta_2)\neq\emptyset$, contradicting the fact that $V(\Cc')\sect W=\emptyset$.

By a result of Qureshi \cite{Q},   $P_\emptyset(\Cc')$ is a prime ideal. Therefore $P_W(\Cc)$ is a prime ideal.
\end{proof}

\begin{Theorem}
\label{primeintersection}
Let $\Cc$ be a special configuration of adjacent $2$-minors.  Then
\[
\sqrt{I(\Cc)}=\Sect_{W}P_W(\Cc),
\]
where the intersection is taken over all admissible sets $W\subset V(\Cc)$.
\end{Theorem}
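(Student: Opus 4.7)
The easy direction $\sqrt{I(\Cc)} \subseteq \bigcap_W P_W(\Cc)$ requires only the inclusion $I(\Cc) \subseteq P_W(\Cc)$ for every admissible $W$, since each $P_W(\Cc)$ is prime by Lemma~\ref{primeadmissible}. For a generator $\delta \in \Cc$: if $V(\delta) \cap W = \emptyset$ then $\delta$ is itself one of the inner $2$-minor generators of $P_W(\Cc)$; otherwise admissibility supplies an edge $\{u,v\}$ of $\delta$ inside $W$, and since one of $u,v$ sits on the diagonal and the other on the anti-diagonal of $\delta$, each of the two monomials of $\delta$ is divisible by a variable in $W$, so $\delta \in (W) \subseteq P_W(\Cc)$.

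For the reverse inclusion $\bigcap_W P_W(\Cc) \subseteq \sqrt{I(\Cc)}$, the plan is to show that every minimal prime $P$ of $I(\Cc)$ has the form $P_W(\Cc)$ for some admissible $W$; then $\bigcap_W P_W(\Cc)$ is contained in the intersection of all minimal primes, which is $\sqrt{I(\Cc)}$. Given such a $P$, set $W := V(\Cc) \cap P$. For admissibility: when $x_{a,b} \in W \cap V(\delta)$, the relations $\delta \in P$ and $x_{a,b} \in P$ force the anti-monomial $x_{a+1,b}x_{a,b+1} \in P$, and primeness places one of the two edge-neighbours of $x_{a,b}$ on $\delta$ into $W$. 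To prove $P_W(\Cc) \subseteq P$, I use the decomposition $P_W(\Cc) = (W, P_\emptyset(\Cc'))$ with $\Cc' := \{\delta \in \Cc : V(\delta) \cap W = \emptyset\}$: the $W$-part lies in $P$ by construction, and for $P_\emptyset(\Cc')$ I note that $\Cc'$ is again special (as in the proof of Lemma~\ref{primeadmissible}), $I(\Cc') \subseteq I(\Cc) \subseteq P$, and no variable of $V(\Cc')$ lies in $P$ because $V(\Cc') \cap W = \emptyset$. Proposition~\ref{saturation} applied to the lattice-basis ideal $I(\Cc')$ identifies its saturation by $U := \prod_{x \in V(\Cc')} x$ with the prime lattice ideal $I_{\Lc(\Cc')}$, which in turn coincides with $P_\emptyset(\Cc')$ for special $\Cc'$ by Qureshi's theorem (the input used in Lemma~\ref{primeadmissible}). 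Hence for any $f \in P_\emptyset(\Cc')$ one has $U^n f \in I(\Cc') \subseteq P$ for some $n$, and $U \notin P$ forces $f \in P$. Combining these gives $P_W(\Cc) \subseteq P$, and minimality of $P$ upgrades the inclusion to equality.

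The main obstacle is the identification $P_\emptyset(\Cc') = I_{\Lc(\Cc')}$ for special $\Cc'$: both are primes above $I(\Cc')$ containing no variable of $V(\Cc')$, and the equality requires that each inner $2$-minor generator of $P_\emptyset(\Cc')$ have its lattice vector already in $\Lc(\Cc')$. This is the structural content supplied by Qureshi's analysis of special configurations; once it is in hand the remaining steps (the admissibility check, the saturation argument, and the minimality upgrade) are routine bookkeeping.
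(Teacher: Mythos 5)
Your overall skeleton matches the paper's: both directions come down to showing that every minimal prime $P$ of $I(\Cc)$ equals $P_W(\Cc)$ for the admissible set $W=V(\Cc)\cap P$, and your admissibility check and the splitting $P_W(\Cc)=(W,P_\emptyset(\Cc'))$ are exactly the paper's (the paper leaves the easy inclusion $I(\Cc)\subseteq P_W(\Cc)$ implicit; your observation that an edge meets both the diagonal and the anti-diagonal is the right justification). The divergence is at the crucial step $P_\emptyset(\Cc')\subseteq \bar{P}$. You get it from Proposition~\ref{saturation} together with the identification $P_\emptyset(\Cc')=I_{\Lc(\Cc')}$, where $\Lc(\Cc')$ is the lattice spanned by the adjacent $2$-minors of $\Cc'$. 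The paper instead uses only the primality of $P_\emptyset(\Cc')$ (Qureshi) plus the fact that its generators are the $2$-minors of full rectangular submatrices whose adjacent $2$-minors all lie in $\Cc'$, and then proves directly, by induction on $(a_2-a_1)+(b_2-b_1)$ via the syzygy $x_{a_2-1,b_1}\delta=x_{a_2,b_1}\delta_1+x_{a_1,b_1}\delta_2$, that a prime containing all adjacent $2$-minors of a rectangle and no variables contains all $2$-minors of that rectangle.

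The caveat is that the identification you flag as ``the main obstacle'' is not available as a black box at this point of the paper. What Lemma~\ref{primeadmissible} imports from Qureshi is only that $P_\emptyset(\Cc')$ is prime; the equality $P_\emptyset(\Cc')=I_{\Lc(\Cc')}$ is Proposition~\ref{latticeideal}, which the paper proves in Section~5 \emph{as a consequence of} Theorem~\ref{primeintersection}, so invoking it here would be circular within the paper's logic. One inclusion is indeed free: $I_{\Lc(\Cc')}=I(\Cc'):U^\infty$ lies in every prime over $I(\Cc')$ not containing $U$, hence in $P_\emptyset(\Cc')$ and in $\bar{P}$. But the inclusion you actually need, $P_\emptyset(\Cc')\subseteq I_{\Lc(\Cc')}$, amounts to showing that the lattice vector of every inner $2$-minor is a $\ZZ$-combination of adjacent $2$-minors of $\Cc'$ --- which is precisely the rectangle structure of special configurations that the paper's induction is designed to exploit. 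So the gap is localized and fillable (either by verifying that stronger statement from Qureshi's analysis or by inserting the paper's determinantal induction), but as written your proof defers the only genuinely nontrivial point to a citation that does not quite cover it.
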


\begin{proof}
We show that if $P$ is a minimal  prime ideal of $I(\Cc)$, then there exists an admissible set $W\subset V(\Cc)$ such that $P=P_W(\Cc)$.

So now let $P$ be any minimal prime ideal of $I(\Cc)$, and let $W$ be the set of variables among the generators of $P$. We claim that $W$ is admissible. Indeed,  suppose that $W\sect V(\delta)\neq \emptyset$ for  some adjacent $2$-minor of $\Cc$. Say, $\delta=ad-bc$ and $a\in W$. Then $bc\in P$. Hence, since $P$ is a prime ideal, it follows that $b\in P$ or $c\in P$. Thus $W$ contains the edge $\{a,c\}$ or the edge $\{a,b\}$ of $\delta$.

Since $I(\Cc)\subset P$ it follows that $(W,I(\Cc))\subset P$. Observe that $(W,I(\Cc))=(W, I(\Cc'))$,  where $W\sect V(\Cc')=\emptyset$ and $\Cc'$ is again a special configuration, see the proof of Lemma~\ref{primeadmissible}.   Modulo $W$ we obtain a minimal prime ideal $\bar{P}$,  which contains no variables, of the ideal $I(\Cc')$.

By the result of Qureshi \cite{Q} the ideal $P_\emptyset(\Cc')$ is a prime ideal containing  $I(\Cc')$. Thus the assertion of the theorem follows once we have shown that $P_\emptyset(\Cc')\subset \bar{P}$.

Since $P_\emptyset(\Cc')$ is generated by the union of the set of  $2$-minors of certain $r\times s$-matrices, it suffices to show that if $P$ is a prime ideal having no variables among its generators and containing all adjacent $2$-minors of the $r\times s$-matrix $X$, then it contains all $2$-minors of $X$. In order to prove this,  let $\delta=[a_1,a_2|b_1,b_2]$ be an arbitrary $2$-minor of $X$. We prove that $\delta\in P$ by induction on $(a_2-a_1)+(b_2-b_1)$. For $(a_2-a_1)+(b_2-b_1)=2$, this is the case by assumption. Now let  $(a_2-a_1)+(b_2-b_1)>2$. We may assume that $a_2-a_1>1$.  Let $\delta_1=[a_1,a_2-1|b_1,b_2]$ and $\delta_2=[a_2-1,a_2|b_1,b_2]$. Then $x_{a_2-1,b_1}\delta= x_{a_2,b_1}\delta_1+x_{a_1,b_1}\delta_2$. Therefore, by induction hypothesis $x_{a_2-1,b_1}\delta \in P$. Since $P$ is a prime ideal, and $x_{a+k-1,1}\not\in P$ it follows that $\delta\in P$, as desired.
\end{proof}

In general it seems to be pretty hard to find the primary decomposition for ideals generated by adjacent $2$-minors. This seems to be even difficult for ideals described in Theorem~\ref{quadratic}. For example, the primary decomposition (computed with the help of Singular) of the ideal $I(\Cc)$ of adjacent $2$-minors shown in Figure~\ref{difficult} is the following:
\begin{figure}[hbt]
\begin{center}
\psset{unit=0.7cm}
\begin{pspicture}(4.5,-1)(4.5,3)
\pspolygon[style=fyp,fillcolor=light](2.8,0)(2.8,1)(3.8,1)(3.8,0)
\pspolygon[style=fyp,fillcolor=light](3.8,-1)(3.8,0)(4.8,0)(4.8,-1)
\pspolygon[style=fyp,fillcolor=light](4.8,0)(4.8,1)(5.8,1)(5.8,0)
\pspolygon[style=fyp,fillcolor=light](3.8,1)(3.8,2)(4.8,2)(4.8,1)
\rput(3.8,2.3){a}
\rput(4.8,2.3){b}
\rput(2.6,1.3){c}
\rput(3.5,1.4){d}
\rput(5.1,1.3){e}
\rput(6.1,1.2){f}
\rput(2.6,-0.3){g}
\rput(3.5,-0.3){h}
\rput(5.1,-0.3){i}
\rput(6.1,-0.2){j}
\rput(3.8,-1.3){k}
\rput(4.8,-1.3){l}
\end{pspicture}
\end{center}
\caption{}\label{difficult}
\end{figure}
\begin{eqnarray*}
I(\Cc)&=&(ae-bd,ch-dg,ej-fi, hl-ik)\\
&= &(ik-hl, fi-ej,  dg-ch,  bd-ae,  bcjk-afgl)\sect (d,e,h,i).
\end{eqnarray*}

It turns out that $I(\Cc)$ is a radical ideal.
On the other hand, if we add the minor $di-eh$ we get a connected configuration $\Cc'$ of adjacent $2$-minors. The ideal $I(\Cc')$ is not radical, because it contains a pin, see Proposition~\ref{onedirection}. Indeed, one has
\begin{eqnarray*}
\sqrt{I(\Cc')}&= &(ae-bd,ch-dg,ej-fi,hl-ik, di-eh,fghl-chjl,\\
&& bfhl-aejl,bchk-achl, bcfh-acej)\\
\end{eqnarray*}
Applying  Proposition~\ref{minimalprimes}, we get
\begin{eqnarray*}
\sqrt{I(\Cc')}&= &(ae-bd,ch-dg,ej-fi,hl-ik, di-eh,fghl-chjl,\\
&& bfhl-aejl,bchk-achl, bcfh-acej)\\
&=&(-ik+hl, -fi+ej, -ek+dl, -fh+dj, -eh+di, -fg+cj, -eg+ci,\\
&&-dg+ch, -bk+al, -bh+ai, -bd+ae)\\
&\sect&(d,e,h,i)\sect (a,d,h,i,j)\sect (d,e,f,h,k)\sect(c,d,e,i,l)\sect(b,e,g,h,i) \\
&\sect&(a,d,h,k,ej-fi)\sect (c,d,e,f,hl-ik)\sect(b,e,i,l,ch-dg)\\
&\sect& (g,h,i,j, ae-bd).
\end{eqnarray*}

The presentation of $\sqrt{I(\Cc)}$ as an intersection of prime ideals as given in Theorem~\ref{primeintersection} is usually not irredundant. In order to obtain an irredundant intersection, we have to identify the minimal prime ideals of $I(\Cc)$ among the prime ideals $P_W(\Cc)$.

\begin{Theorem}
\label{minimalprimes}
Let $\Cc$ be a special configuration of adjacent $2$-minors,  and let $V,W\subset V(\Cc)$ be admissible sets of $\Cc$, and let $P_V(\Cc)=(V,\Gc(\Cc'))$ and $P_W(\Cc)=(W,\Gc(\Cc''))$, as given in Lemma~\ref{primeadmissible}. Then
\begin{enumerate}
\item[{\em (a)}] $P_V(\Cc)\subset P_W(\Cc)$ if and only if $V\subset W$, and for all elements $$\delta\in \Gc(\Cc')\setminus \Gc(\Cc'')$$  one has that $W\sect V(\delta)$ contains an edge of $\delta$.
\item[{\em (b)}] $P_W(\Cc)=(W,\Gc(\Cc''))$ is a minimal prime ideal of $I(\Cc)$ if and only if for all admissible subsets $V\subset W$ with $P_V(\Cc)=(V,\Gc(\Cc'))$ there exists  $$\delta \in \Gc(\Cc')\setminus \Gc(\Cc'')$$ such that the set $W\sect V(\delta)$ does not contain and edge of $\delta$.
\end{enumerate}
\end{Theorem}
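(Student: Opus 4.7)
My plan is to prove part~(a) in detail and then deduce part~(b) as a formal consequence of (a) and Theorem~\ref{primeintersection}. For the $(\Leftarrow)$ direction of (a), I verify that each generator of $P_V(\Cc)$ belongs to $P_W(\Cc)$: the variables in $V$ lie in $W\subseteq P_W(\Cc)$, and a generator $\delta\in\Gc(\Cc')$ is either already in $\Gc(\Cc'')$ (a generator of $P_W(\Cc)$), or else by hypothesis $W\cap V(\delta)$ contains an edge $\{u,v\}$ of $\delta$; since both monomials of $\delta$ are then divisible by a variable in $\{u,v\}\subseteq W$, we conclude $\delta\in (W)\subseteq P_W(\Cc)$.

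For $(\Rightarrow)$, assume $P_V(\Cc)\subseteq P_W(\Cc)$. Any variable $x_v\in V$ lies in $P_W(\Cc)=(W)+(\Gc(\Cc''))$; since $(\Gc(\Cc''))$ is generated in degree~$2$, comparing degree-$1$ parts expresses $x_v$ as a $K$-linear combination of variables in $W$, forcing $x_v\in W$ and hence $V\subseteq W$. For the edge condition, fix $\delta=x_px_q-x_rx_s\in\Gc(\Cc')\setminus\Gc(\Cc'')$, where $x_px_q$ is the diagonal monomial and $x_rx_s$ the anti-diagonal. Since $\delta\in P_W(\Cc)$ is homogeneous of degree~$2$, we may write
\[
\delta \;=\; \sum_i w_i L_i \;+\; \sum_j c_j \gamma_j,
\]
with $w_i\in W$, each $L_i\in S$ linear, $c_j\in K$, and $\gamma_j\in\Gc(\Cc'')$.

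The key observation is that the only $2$-minor $[a_1,a_2|b_1,b_2]$ in which the monomial $x_px_q$ appears is $\delta$ itself, because a $2$-minor is determined by either of its two monomials. Since $\delta\notin\Gc(\Cc'')$, no $\gamma_j$ contributes to the monomial $x_px_q$, and hence its coefficient $1$ in $\delta$ must come from $\sum_i w_iL_i$. Therefore some $w_i$ divides $x_px_q$, giving $x_p\in W$ or $x_q\in W$. The same argument applied to the anti-diagonal monomial $x_rx_s$ yields $x_r\in W$ or $x_s\in W$. Each of the four possible pairings of one diagonal corner with one anti-diagonal corner consists of two corners of $\delta$ sharing a row or a column --- that is, an edge of $\delta$ --- so $W\cap V(\delta)$ contains an edge of $\delta$, as required.

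Part~(b) is then immediate. By Theorem~\ref{primeintersection} every minimal prime of $I(\Cc)$ has the form $P_U(\Cc)$ for some admissible $U$, so $P_W(\Cc)$ is a minimal prime if and only if no admissible $V\subsetneq W$ satisfies $P_V(\Cc)\subsetneq P_W(\Cc)$ (a degree-$1$ comparison as above also shows that $V\subsetneq W$ forces $P_V(\Cc)\neq P_W(\Cc)$). By~(a), such a proper inclusion amounts to the edge condition holding for every $\delta\in \Gc(\Cc')\setminus\Gc(\Cc'')$; requiring its negation for every admissible $V\subsetneq W$ is exactly the condition stated in~(b).
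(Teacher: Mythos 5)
Your proof is correct, and its overall architecture is the same as the paper's: the easy inclusion, the observation that the only variables in $P_W(\Cc)$ are those in $W$, the edge condition for each $\delta\in\Gc(\Cc')\setminus\Gc(\Cc'')$, and part (b) read off from Theorem~\ref{primeintersection} together with (a). The one step where you take a genuinely different (and slightly more self-contained) route is the edge condition in (a)($\Rightarrow$). The paper splits it into two stages: first it shows $W\cap V(\delta)\neq\emptyset$ by noting that every degree-two binomial in $P_\emptyset(\Cc'')$ must already lie in $\Gc(\Cc'')$ (since each monomial occurs in only one generator), and then, given a vertex $a\in W\cap V(\delta)$ with $\delta=ad-bc$, it uses that $P_W(\Cc)$ is \emph{prime} (Lemma~\ref{primeadmissible}, i.e.\ Qureshi's theorem) to conclude $bc\in P_W(\Cc)$ forces $b\in W$ or $c\in W$. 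Your coefficient comparison on the diagonal and anti-diagonal monomials, based on the fact that a $2$-minor is determined by either of its monomials, delivers both conclusions in one stroke and does not invoke primality of $P_W(\Cc)$ at all; the trade-off is negligible here since primality is needed elsewhere anyway, but your version is the more elementary of the two. Your explicit justification in (b) that distinct admissible sets give distinct primes (so that $V\subsetneq W$ with $P_V\subseteq P_W$ really is a strict inclusion of ideals) fills in a detail the paper leaves implicit.
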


\begin{proof}
(a) Suppose that  $P_V(\Cc)\subset P_W(\Cc)$. The only variables in $P_W(\Cc)$ are those belonging to $W$. This shows that  $V\subset W$. The inclusion  $P_V(\Cc)\subset P_W(\Cc)$ implies that $\delta\in (W, \Gc(\Cc''))$ for all $\delta \in \Gc(\Cc')$. Suppose
$W\sect V(\delta)=\emptyset$. Then $\delta$ belongs to  $P_{\emptyset}(\Cc'')=(\Gc(\Cc''))$. Let $f=u-v\in \Gc(\Cc'')$. Neither $u$ now $v$ appears in another element of $\Gc(\Cc'')$. Therefore any binomial of degree $2$ in   $P_{\emptyset}(\Cc'')$ belongs to  $\Gc(\Cc'')$. In particular,   $\delta\in  \Gc(\Cc'')$, a contradiction. Therefore, $W\sect V(\delta)\neq \emptyset$. Suppose that $W\sect V(\delta)$ does not contain an edge of $\delta=ad-bc$. We may assume that $a\in W\sect V(\delta)$.  Then, since $\delta\in P_W(\Cc)$, it follows that $bc\in P_W(\Cc)$. Since $P_W(\Cc)$ is a prime ideal, we  conclude  that $b\in P_W(\Cc)$ or $c\in P_W(\Cc)$.Then $b\in W$ or $c\in W$ and hence either the edge $\{a,b\}$ or the edge $\{a,c\}$ belongs to $W\sect V(\delta)$.

The `if' part of statement (a) is obvious.

(b) is a simple consequence of Theorem~\ref{primeintersection} and statement (a).
\end{proof}

In Figure~\ref{minimalp} we display all the minimal prime ideals $I(\Pc)$ for the  path $\Pc$ shown in Figure~\ref{admissible}. The fat dots mark the admissible sets and the dark shadowed areas, the regions where the inner $2$-minors  have to be taken.

\begin{figure}[hbt]
\begin{center}
\psset{unit=0.7cm}
\begin{pspicture}(4.5,-3)(4.5,4)
\pspolygon[style=fyp,fillcolor=dark](-5,0)(-5,1)(-4,1)(-4,0)
\pspolygon[style=fyp,fillcolor=dark](-4,0)(-4,1)(-3,1)(-3,0)
\pspolygon[style=fyp,fillcolor=dark](-3,1)(-3,2)(-2,2)(-2,1)
\pspolygon[style=fyp,fillcolor=dark](-4,1)(-4,2)(-3,2)(-3,1)

\pspolygon[style=fyp,fillcolor=dark](0,0)(0,1)(1,1)(1,0)
\pspolygon[style=fyp,fillcolor=light](1,0)(1,1)(2,1)(2,0)
\pspolygon[style=fyp,fillcolor=light](2,1)(2,2)(3,2)(3,1)
\pspolygon[style=fyp,fillcolor=light](1,1)(1,2)(2,2)(2,1)
\rput(2,0){$\bullet$}
\rput(2,1){$\bullet$}
\rput(2,2){$\bullet$}

\pspolygon[style=fyp,fillcolor=light](4,0)(4,1)(5,1)(5,0)
\pspolygon[style=fyp,fillcolor=light](5,0)(5,1)(6,1)(6,0)
\pspolygon[style=fyp,fillcolor=dark](6,1)(6,2)(7,2)(7,1)
\pspolygon[style=fyp,fillcolor=light](5,1)(5,2)(6,2)(6,1)
\rput(5,0){$\bullet$}
\rput(5,1){$\bullet$}
\rput(5,2){$\bullet$}

\pspolygon[style=fyp,fillcolor=light](9,0)(9,1)(10,1)(10,0)
\pspolygon[style=fyp,fillcolor=light](10,0)(10,1)(11,1)(11,0)
\pspolygon[style=fyp,fillcolor=light](11,1)(11,2)(12,2)(12,1)
\pspolygon[style=fyp,fillcolor=light](10,1)(10,2)(11,2)(11,1)
\rput(10,0){$\bullet$}
\rput(10,1){$\bullet$}
\rput(11,1){$\bullet$}
\rput(12,1){$\bullet$}

\pspolygon[style=fyp,fillcolor=light](-5,-3)(-5,-2)(-4,-2)(-4,-3)
\pspolygon[style=fyp,fillcolor=light](-4,-3)(-4,-2)(-3,-2)(-3,-3)
\pspolygon[style=fyp,fillcolor=light](-3,-2)(-3,-1)(-2,-1)(-2,-2)
\pspolygon[style=fyp,fillcolor=light](-4,-2)(-4,-1)(-3,-1)(-3,-2)
\rput(-4,-3){$\bullet$}
\rput(-4,-2){$\bullet$}
\rput(-3,-2){$\bullet$}
\rput(-3,-1){$\bullet$}

\pspolygon[style=fyp,fillcolor=light](0,-3)(0,-2)(1,-2)(1,-3)
\pspolygon[style=fyp,fillcolor=light](1,-3)(1,-2)(2,-2)(2,-3)
\pspolygon[style=fyp,fillcolor=light](2,-2)(2,-1)(3,-1)(3,-2)
\pspolygon[style=fyp,fillcolor=light](1,-2)(1,-1)(2,-1)(2,-2)
\rput(0,-2){$\bullet$}
\rput(1,-2){$\bullet$}
\rput(2,-2){$\bullet$}
\rput(3,-2){$\bullet$}

\pspolygon[style=fyp,fillcolor=light](4,-3)(4,-2)(5,-2)(5,-3)
\pspolygon[style=fyp,fillcolor=light](5,-3)(5,-2)(6,-2)(6,-3)
\pspolygon[style=fyp,fillcolor=light](6,-2)(6,-1)(7,-1)(7,-2)
\pspolygon[style=fyp,fillcolor=light](5,-2)(5,-1)(6,-1)(6,-2)
\rput(4,-2){$\bullet$}
\rput(5,-2){$\bullet$}
\rput(6,-2){$\bullet$}
\rput(6,-1){$\bullet$}

\end{pspicture}
\end{center}
\caption{}\label{minimalp}
\end{figure}

\section{Special configurations which are radical}

This section is devoted to study  special  configuration of adjacent $2$-minors $\Cc$ for which $I(\Cc)$ is a radical ideal. Any special configuration $\Cc$ is a disjoint union $\Union_{i=1}^k\Cc^i$ of connected special configurations of adjacent $2$-minors. It follows that $I(\Cc)$ is radical if and only if each $I(\Cc^i)$ is radical. Thus when we discuss the radical property we may always assume that $\Cc$ is connected.

We call a configuration $\Cc$ of adjacent $2$-minors a {\em cycle}, if each for each $\delta\in \Cc$ there exist exactly two $\delta_1,\delta_2\in \Cc$ such that $\delta$ and $\delta_1$ have a common edge and $\delta$ and $\delta_2$ have a common edge.

\begin{Lemma}
\label{pinlemma}
Let $\Cc$ be a connected special configuration which does not contain a pin. Then $\Cc$ is a path or a cycle.
\end{Lemma}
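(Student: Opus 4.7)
The plan is to study the auxiliary graph $H$ with vertex set $\Cc$ and an edge between $\delta,\delta'\in\Cc$ whenever they share a common edge of their underlying unit cells. Connectedness of $\Cc$ in the paper's sense is precisely connectedness of $H$, so the goal is to show $H$ is a path or a cycle, and then to verify the paper's (geometric) definition of a path; the definition of a cycle is already purely combinatorial and will come for free.

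First I would show that every vertex of $H$ has degree at most $2$. Suppose, to the contrary, that some $\delta\in\Cc$ has three distinct edge-neighbors $\delta_1,\delta_2,\delta_3$. A unit cell has four sides, and any side is shared with at most one other cell, so $\delta_1,\delta_2,\delta_3$ sit across three different sides of $\delta$; the four cells $\{\delta,\delta_1,\delta_2,\delta_3\}$ then form a T-shape, i.e., a pin up to rotation, contradicting the hypothesis. Hence $H$ is a connected graph of maximum degree at most $2$, so it is either a cycle or a path. If $H$ is a cycle, each minor in $\Cc$ has exactly two edge-neighbors, and $\Cc$ is a cycle in the paper's sense.

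Suppose instead that $H$ is a path, and label its vertices consecutively $\delta_1,\ldots,\delta_r$. Then $\delta_{i-1}\sect\delta_i$ is an edge of $\delta_i$ by definition of $H$. For the containment $\delta_j\sect\delta_i\subset\delta_{i-1}\sect\delta_i$ with $j<i$, the case $j=i-1$ is trivial; and if $j<i-1$, then $\delta_j$ and $\delta_i$ are non-adjacent in $H$, which forces $|\delta_j\sect\delta_i|\leq 1$. The main obstacle is the remaining subcase $\delta_j\sect\delta_i=\{v\}$ with $j<i-1$. Specialness provides a bridge $\gamma\in\Cc$ sharing an edge with each of $\delta_j$ and $\delta_i$; since $\gamma$ is an edge-neighbor of both in the path $H$, one needs $\gamma\in\{\delta_{j-1},\delta_{j+1}\}\cap\{\delta_{i-1},\delta_{i+1}\}$, and given $j<i-1$ this set is nonempty only when $i=j+2$ and $\gamma=\delta_{i-1}$. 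A direct geometric check shows that two unit cells meeting in the single vertex $v$ are diagonally placed, and each of the two unit cells that could serve as a bridge contains $v$ as a corner; hence $v\in\delta_{i-1}$ and so $v\in\delta_{i-1}\sect\delta_i$, as required. (The scenario where both potential bridge cells lie in $\Cc$ would produce a $2\times 2$ square and hence a $4$-cycle inside $H$, incompatible with $H$ being a path, so it would already have fallen into the cycle case.)
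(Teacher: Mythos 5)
Your proof is correct, and its core is the same as the paper's: pin-freeness forces every minor to have at most two edge-neighbours, so the edge-adjacency graph $H$ is a connected graph of maximum degree $2$, hence a path or a cycle. Where you genuinely go beyond the paper is the last step. The paper's proof stops at the degree argument ("a simple induction argument yields the desired conclusion") and never verifies the actual definition of a path, which requires $\delta_j\sect\delta_i\subset\delta_{i-1}\sect\delta_i$ for all $j<i$; in particular the paper's argument never invokes the hypothesis that $\Cc$ is \emph{special}. That hypothesis is in fact indispensable, and your proof is the one that uses it correctly: two minors that are far apart in the path order could a priori meet in a single corner vertex, violating the path axiom, and it is exactly specialness (together with the path structure of $H$) that forces the bridging minor to be $\delta_{i-1}$ and hence forces the common corner into $\delta_{i-1}\sect\delta_i$. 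Without specialness the lemma is false: a pin-free, edge-connected strip of unit cells whose two ends approach each other diagonally and meet only in a corner, with neither of the two cells completing that $2\times 2$ block present, has $H$ a path graph but satisfies neither the paper's definition of a path nor that of a cycle. So your write-up is not merely a more pedantic version of the paper's proof; it supplies the missing half of the argument.
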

\begin{proof} If $\Cc$ does not contain a pin, then for each adjacent $2$-minor $\delta \in \Cc$ there exists at most two adjacent $2$-minors in $\Cc$ which have a common edge which have a common edge with $\delta$. Thus, if $\Cc$ is not a cycle but connected, there exists $\delta_1,\delta_2\in \Cc$ such that $\delta_1$ has a common edge only with $\delta_1$. Now in the configuration $\Cc' =\Cc\setminus \{\delta_1\}$  the element $\delta_2$ has at most one edge in common with another element of $\Cc'$. If $\delta_2$ has no edge in common with another element of $\Cc'$, then $\Cc=\{\delta_1,\delta_2\}$. Otherwise, continuing this argument, a simple induction argument yields the desired conclusion.
\end{proof}

\begin{Proposition}
\label{onedirection}
Let $\Cc$  be a connected special configuration of adjacent $2$-minors. If  $I(\Cc)$ is a radical ideal, then   $\Cc$ is a path or a cycle.
\end{Proposition}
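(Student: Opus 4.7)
I argue by contrapositive: assume $\Cc$ is a connected special configuration which is neither a path nor a cycle, and show that $I(\Cc)$ is not a radical ideal. By Lemma~\ref{pinlemma}, the hypothesis forces $\Cc$ to contain a pin $\mathcal{P}$, i.e., four adjacent $2$-minors $\delta_L,\delta_M,\delta_R,\delta_T$ in a $T$-shape with $\delta_M$ the central minor sharing an edge with each of the other three. After relabeling I may suppose $\delta_L = x_{21}x_{32} - x_{22}x_{31}$, $\delta_M = x_{22}x_{33} - x_{23}x_{32}$, $\delta_R = x_{23}x_{34} - x_{24}x_{33}$ and $\delta_T = x_{12}x_{23} - x_{13}x_{22}$. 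The plan is to exhibit an explicit polynomial $f \in \sqrt{I(\Cc)} \setminus I(\Cc)$, witnessing the failure of radicality.

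First I record the Pl\"ucker-type syzygies produced by the pin. A direct check gives
\[
x_{22}\mu_{LR} = x_{21}\delta_M + x_{23}\delta_L, \qquad x_{22}\mu_{TL} = x_{32}\delta_T + x_{12}\delta_M, \qquad x_{33}\mu_{TR} = x_{32}\delta_R + x_{34}\delta_M,
\]
where $\mu_{LR} = x_{21}x_{33} - x_{23}x_{31}$, $\mu_{TL} = x_{12}x_{33} - x_{13}x_{32}$ and $\mu_{TR} = x_{22}x_{34} - x_{24}x_{32}$ are non-adjacent $2\times 2$ minors of the submatrix indexed by $V(\mathcal{P})$. Each $\mu$ is thereby annihilated in $S/I(\Cc)$ by a ``core'' pin variable ($x_{22}$ or $x_{33}$), so these minors are the natural candidate witnesses of non-radicality; the required $f$ will be constructed as a suitable polynomial combination of them, of the form occurring, for instance, among the degree-$4$ elements $bchk - achl$ and $bcfh - acej$ in the worked computation following Theorem~\ref{primeintersection}.

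Given such a candidate $f$, I verify $f \in \sqrt{I(\Cc)}$ via Theorem~\ref{primeintersection}: one must check that $f \in P_W(\Cc)$ for every admissible $W \subseteq V(\Cc)$. This is done by a case analysis on the intersection $W \cap V(\mathcal{P})$. When $W$ absorbs an edge of $\delta_M$, one factor of each monomial term of $f$ already lies in $(W)$; when $W$ is disjoint from the pin's core $\{x_{22}, x_{23}, x_{32}, x_{33}\}$, the whole pin survives in the restricted configuration $\Cc_W = \{\delta \in \Cc : V(\delta) \cap W = \emptyset\}$, and $f$ reduces modulo $(W)$ to an element of the prime ideal $P_\emptyset(\Cc_W)$ of inner $2$-minors. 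To see that $f \notin I(\Cc)$, I apply Lemma~\ref{choice} to choose a lexicographic order in which $\ini_<(\delta_M) = x_{22}x_{33}$, so that the given adjacent $2$-minors form a Gr\"obner basis of $I(\Cc)$ with pairwise prescribed initial monomials, and then check that the leading monomial of $f$ is divisible by none of these initial monomials.

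The main obstacle is isolating the correct $f$: the pin creates just enough redundancy to push an element into $\sqrt{I(\Cc)}$, yet no individual non-adjacent minor $\mu_{LR}$, $\mu_{TL}$ or $\mu_{TR}$ already lies in every $P_W(\Cc)$ (there are admissible sets $W$ that cut a single prong of the pin without absorbing any edge of $\delta_M$, and against those any linear combination of the $\mu$'s fails to land in $P_W$). The correct witness has to be a degree-$4$ polynomial that simultaneously ``sees'' all three prongs of the pin, mirroring the pattern visible in the explicit example of Figure~\ref{difficult}; extracting such an $f$ uniformly across every configuration $\Cc$ containing a pin is the principal difficulty of the proof.
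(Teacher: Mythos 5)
Your reduction via Lemma~\ref{pinlemma} to the claim that a connected special configuration containing a pin cannot have radical $I(\Cc)$ is exactly the paper's first step. But from there the argument has a genuine gap: you never produce the witness. The paper writes down a concrete binomial supported on the pin --- in the labeling of Figure~\ref{pininside} it is $q=acej-bcfh$ --- and verifies directly that $q^2\in I(\Cc')$ while $q\notin I(\Cc')$, where $\Cc'$ is the pin itself. Your proposal identifies the right shape of element (indeed $bcfh-acej$ appears in your list of candidates), but you explicitly defer its construction and verification as ``the principal difficulty,'' which is precisely the content of the proposition. Note also that the paper does not route the membership $q\in\sqrt{I(\Cc)}$ through Theorem~\ref{primeintersection} and an analysis of all admissible sets; it simply checks $q^2\in I(\Cc')\subset I(\Cc)$, which is both shorter and avoids any dependence on the global structure of $\Cc$.

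Two further points would sink the argument even if a candidate $f$ were fixed. First, your method for proving $f\notin I(\Cc)$ is unsound: Lemma~\ref{choice} only guarantees an order in which the \emph{marked} monomials are the initial monomials of the \emph{generators}; it does not make the generators a Gr\"obner basis. In fact, by Theorem~\ref{quadratic} a configuration containing a pin never admits a quadratic lex Gr\"obner basis, so the reduced Gr\"obner basis contains higher-degree elements whose leading terms you have not controlled, and non-divisibility by the quadratic leading terms proves nothing. The paper instead shows $q\notin(I(\Cc),W)$, where $W$ is the set of vertices outside the pin, reducing non-membership to a small explicit computation. Second, non-membership genuinely depends on how the pin sits inside $\Cc$: if the adjacent minors $kd-ac$ or $bf-le$ flanking the pin belong to $\Cc$, the element $q$ may in fact lie in $I(\Cc)$, and the paper must pass to a second configuration $\Cc''$ and a second witness $r=kdi-aeg$ with $r^2\in I(\Cc'')$, $r\notin I(\Cc)$. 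Your proposal does not address this case split at all, so even granting a uniform candidate $f$ on the pin, the conclusion $f\notin I(\Cc)$ would fail for some ambient configurations.
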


\begin{proof}
By Lemma~\ref{pinlemma} it is enough to prove that $\Cc$ does not contain a pin. Suppose $\Cc$ contains the  pin $\Cc'$ as in Figure~\ref{pininside}.
\begin{figure}[hbt]
\begin{center}
\psset{unit=0.7cm}
\begin{pspicture}(4.5,-1)(4.5,4)
\pspolygon[style=fyp,fillcolor=light](2.8,0)(2.8,1)(3.8,1)(3.8,0)
\pspolygon[style=fyp,fillcolor=light](3.8,0)(3.8,1)(4.8,1)(4.8,0)
\pspolygon[style=fyp,fillcolor=light](4.8,0)(4.8,1)(5.8,1)(5.8,0)
\pspolygon[style=fyp,fillcolor=light](3.8,1)(3.8,2)(4.8,2)(4.8,1)

\rput(3.8,2.3){$a$}
\rput(4.8,2.3){$b$}
\rput(2.65,1.25){$c$}
\rput(6,1.3){$f$}
\rput(2.8,-0.3){$g$}
\rput(3.8,-0.3){$h$}
\rput(4.8,-0.3){$i$}
\rput(5.8,-0.3){$j$}
\rput(3.5,1.35){$d$}
\rput(5.1,1.3){$e$}
\end{pspicture}
\end{center}
\caption{}\label{pininside}
\end{figure}
Then $q=acej-bcfh\not\in I(\Cc')$ but $q^2\in I(\Cc')\subset I(\Cc)$. We consider two cases. In the first case suppose that the adjacent $2$-minors $kd-ac$ and $bf-le$ do not belong to $\Cc$, see Figure~\ref{notbelong}.

\begin{figure}[hbt]
\begin{center}
\psset{unit=0.7cm}
\begin{pspicture}(4.5,-1)(4.5,4)
\pspolygon[style=fyp,fillcolor=light](2.8,0)(2.8,1)(3.8,1)(3.8,0)
\pspolygon[style=fyp,fillcolor=light](3.8,0)(3.8,1)(4.8,1)(4.8,0)
\pspolygon[style=fyp,fillcolor=light](4.8,0)(4.8,1)(5.8,1)(5.8,0)
\pspolygon[style=fyp,fillcolor=light](3.8,1)(3.8,2)(4.8,2)(4.8,1)
\pspolygon[style=fyp,fillcolor=light](2.8,1)(2.8,2)(3.8,2)(3.8,1)
\pspolygon[style=fyp,fillcolor=light](4.8,1)(4.8,2)(5.8,2)(5.8,1)
\rput(3.8,2.3){$a$}
\rput(4.8,2.3){$b$}
\rput(2.5,1){$c$}
\rput(6.1,1){$f$}
\rput(2.8,-0.3){$g$}
\rput(3.8,-0.3){$h$}
\rput(4.8,-0.3){$i$}
\rput(5.8,-0.3){$j$}
\rput(3.5,1.35){$d$}
\rput(5.1,1.3){$e$}
\rput(2.8,2.3){$k$}
\rput(5.8,2.3){$l$}
\end{pspicture}
\end{center}
\caption{}\label{notbelong}
\end{figure}

Then $q\not\in (I(\Cc),W)$ where $W$ is the set of vertices which do not belong to $\Cc'$. It follows that $q\not\in I(\Cc)$. In the second case we may assume that $ac-kd\in \Cc$.
Let $\Cc''$ be the configuration with the adjacent $2$-minors $kd-ac,ae-bd,ch-dg,di-eh$. Then $r=kdi-aeg\not\in I(\Cc'')$ but $r^2\in I(\Cc'')\subset I(\Cc)$. Then $r\not \in (I(\Cc),V)$ where $V$ is the set of vertices in $\Cc$ which do not belong to $\Cc'$. It follows that $r\not\in I(\Cc)$. Thus in both cases we see that $I(\Cc)$ is not a radical ideal.
\end{proof}

For the cycle $\Cc$ displayed in Figure~\ref{cycle} the ideal $I(\Cc)$ is not radical.
Indeed we have $f=b^2hino-abhjno\not\in I(\Cc)$, but $f^2\in I(\Cc)$. By computational evidence, we expect that the  ideal of adjacent $2$-minors of a cycle is a radical ideal if and only if the length of the cycle is $\geq 12$.
\begin{figure}[hbt]
\begin{center}
\psset{unit=0.7cm}
\begin{pspicture}(-0.5,1)(-0.5,5)
\pspolygon[style=fyp,fillcolor=light](-1,1)(0,1)(0,2)(-1,2)
\pspolygon[style=fyp,fillcolor=light](-2,1)(-1,1)(-1,2)(-2,2)
\pspolygon[style=fyp,fillcolor=light](0,1)(1,1)(1,2)(0,2)
\pspolygon[style=fyp,fillcolor=light](-2,2)(-1,2)(-1,3)(-2,3)
\pspolygon[style=fyp,fillcolor=light](-2,3)(-1,3)(-1,4)(-2,4)
\pspolygon[style=fyp,fillcolor=light](-1,3)(0,3)(0,4)(-1,4)
\pspolygon[style=fyp,fillcolor=light](0,3)(1,3)(1,4)(0,4)
\pspolygon[style=fyp,fillcolor=light](0,2)(1,2)(1,3)(0,3)
\rput(-2,4.3){$a$}
\rput(-1,4.3){$b$}
\rput(0,4.3){$c$}
\rput(1,4.3){$d$}
\rput(-2.3,3){$e$}
\rput(-1.3,3.3){$f$}
\rput(0.3,3.3){$g$}
\rput(1.3,3){$h$}
\rput(-2.3,2){$i$}
\rput(-1.3,1.7){$j$}
\rput(0.3,1.7){$k$}
\rput(1.3,2){$l$}
\rput(-2,0.7){$m$}
\rput(-1,0.7){$n$}
\rput(0,0.7){$o$}
\rput(1,0.7){$p$}
\end{pspicture}
\end{center}
\caption{}\label{cycle}
\end{figure}
On the other hand, if  $\Pc$ is a monotone path, we know from Theorem~\ref{quadratic} that $I(\Pc)$ has a squarefree initial ideal. This implies that $I(\Pc)$ is a radical ideal. More generally, we expect that ideal of adjacent $2$-minors of a path  $\Pc$  is always a radical ideal, and prove this under the assumption that the ideal $I(\Pc)$  has no embedded prime ideals. In \cite[Theorem 4.2]{DES} the primary decomposition of the ideal of adjacent $2$-minors of a $4\times 4$-matrix is given, from which it can be seen that in general the  ideal of adjacent $2$-minors of a special configuration may have embedded prime ideals.

\begin{Theorem}
\label{radicalpath}
Let $\Pc$ be path, and suppose that $I(\Pc)$ has no embedded prime ideals. Then $I(\Pc)$ is a radical ideal.
\end{Theorem}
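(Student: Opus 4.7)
The plan is to exploit the no-embedded-primes hypothesis via a localization reduction, then handle the resulting local problem at each minimal prime using Theorem~\ref{minimalprimes} together with Proposition~\ref{saturation}.

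I first reduce to a local statement: a Noetherian ideal $I$ without embedded primes is radical if and only if $I S_P = P S_P$ for every minimal prime $P$ of $I$. Indeed, the $P$-primary component of $I$ is $\varphi_P^{-1}(I S_P)$ for $\varphi_P\colon S\to S_P$; this equals $P$ precisely when $I S_P = P S_P$, and radicality of $I$ is equivalent to every primary component coinciding with its radical. So, fixing a minimal prime $P$ of $I(\Pc)$, it suffices to prove the nontrivial inclusion $P S_P \subset I(\Pc) S_P$.

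By Theorem~\ref{minimalprimes}, $P = P_W(\Pc) = (W) + P_\emptyset(\Cc')$ for an admissible set $W$, where $\Cc' = \{\delta \in \Pc\colon V(\delta) \cap W = \emptyset\}$. Since $\Pc$ is a path, each connected component $\Pc''$ of $\Cc'$ is itself a sub-path, hence a special configuration. By Qureshi's result used in Lemma~\ref{primeadmissible}, $P_\emptyset(\Pc'')$ is prime and coincides with the lattice ideal of $\Pc''$, so Proposition~\ref{saturation} gives
\[
I(\Pc'')\colon \Bigl(\prod_{v\in V(\Pc'')}v\Bigr)^{\!\infty} = P_\emptyset(\Pc'').
\]
Since $V(\Pc'') \cap W = \emptyset$, every variable in $V(\Pc'')$ is a unit in $S_P$, so $P_\emptyset(\Pc'') S_P = I(\Pc'') S_P \subset I(\Pc) S_P$. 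Summing over the components of $\Cc'$ gives $P_\emptyset(\Cc') S_P \subset I(\Pc) S_P$.

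The main obstacle is to prove $W \subset I(\Pc) S_P$. The idea is to use the minimality criterion of Theorem~\ref{minimalprimes}(b), together with the path structure, to linearly order the elements of $W$ and peel them off one at a time: for each $w \in W$ maximal in this ordering, one locates an adjacent $2$-minor $\delta \in \Pc$ incident to $w$ whose other $W$-vertex (whose existence is forced by admissibility of $W$ along $\delta$) can be expressed via $\delta$, the inner $2$-minors already placed in $I(\Pc) S_P$, and units; this places $w$ into $I(\Pc) S_P$, and one proceeds. The crucial input is that $\Pc$ is a path---no cycles---which ensures the peeling order is acyclic and the elimination terminates. For cyclic configurations the same strategy meets cyclic dependencies, consistent with the paper's observation (after Figure~\ref{cycle}) that short cycles need not produce radical ideals. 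Verifying that such an order exists and that each step really does land a new $w$ inside $I(\Pc) S_P$ is the delicate combinatorial core of the argument.
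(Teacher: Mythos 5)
Your localization framework is valid and genuinely different from the paper's argument: you reduce radicality (given no embedded primes) to $I(\Pc)S_P=PS_P$ at each minimal prime $P=P_W(\Pc)$, and you handle the binomial part correctly, since every variable of $V(\Cc')$ lies outside the prime $P_W(\Pc)$ and is therefore a unit in $S_P$, so that Proposition~\ref{saturation} (applied componentwise, with Qureshi's identification of the lattice ideal as $P_\emptyset(\Pc'')$) gives $P_\emptyset(\Cc')S_P\subset I(\Pc)S_P$. The paper instead never localizes: it produces a single variable $a$ of the first minor avoiding all minimal primes (Lemma~\ref{nonzerodivisor}), uses that $I$ is radical iff $(a,I)$ is radical for a regular element $a$, and then inducts on the length of the path, repeatedly splitting ideals of the form $(bc,J)$ as $(b,J)\cap(c,J)$ via Lemma~\ref{easy}.

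The step you defer as ``the delicate combinatorial core'' --- proving $W\subset I(\Pc)S_P$ --- is, however, the actual content of the theorem, and the mechanism you sketch for it cannot work as described. If $\delta\in\Pc$ meets $W$ in an edge, say $\delta=ad-bc$ with $a,b\in W$ and $c,d\notin W$, then in $S_P$ the relation $\delta\in I(\Pc)S_P$ yields only $a\equiv b\,cd^{-1}$, a unit-proportionality between the two $W$-vertices of that edge; no single adjacent minor ever ``places $w$ into $I(\Pc)S_P$'', so a one-at-a-time peeling order cannot get started. What actually happens is that these proportionalities must be composed along the path until they close up on a single $w\in W$, producing $w\cdot u\in I(\Pc)S_P$ where $u$ is, up to units, a non-adjacent inner $2$-minor of $\Pc$; one then needs precisely the minimality of $P_W$ (Theorem~\ref{minimalprimes}(b)) to guarantee that such a $u$ lies outside $P_W$ and hence is a unit in $S_P$. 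For instance, for the path of Figure~\ref{admissible} and the minimal prime with $W=\{a,e,i\}$, the minors $di-eh$ and $ej-fi$ give $i\equiv ehd^{-1}$ and $e\equiv fij^{-1}$, whence $i\,(dj-fh)\in I(\Pc)S_P$ up to a unit, and $dj-fh\notin P_W$ is exactly a witness of minimality. Your plan neither identifies this mechanism nor shows that for every $w\in W$ such a chain exists and terminates in a minimality witness, so the proof is incomplete at its decisive point.
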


The proof will require several steps.

\begin{Lemma}
\label{easy}
Let $I\subset S$ be an ideal, and let $a, b\in S$ such that $a$ is a nonzero divisor modulo $(b,I)$. Then
\[
(ab,I)=(a,I)\sect(b,I).
\]
\end{Lemma}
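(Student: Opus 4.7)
The plan is to prove the two inclusions separately, with the forward inclusion being immediate and the reverse inclusion using the nonzero-divisor hypothesis in the standard way.

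First, the inclusion $(ab,I) \subset (a,I) \cap (b,I)$ requires no hypothesis: $ab$ lies in $(a,I)$ and in $(b,I)$, and $I$ lies in both, so any element of $(ab,I)$ lies in the intersection.

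For the reverse inclusion, I would take an element $f \in (a,I) \cap (b,I)$ and write $f = ax + i$ for some $x \in S$ and $i \in I$. Since $f \in (b,I)$, the equation $ax \equiv 0 \pmod{(b,I)}$ holds. Here is where the hypothesis enters: because $a$ is a nonzero divisor modulo $(b,I)$, this forces $x \equiv 0 \pmod{(b,I)}$, i.e., $x = by + i'$ for some $y \in S$ and some $i' \in I$. Substituting back yields $f = a(by + i') + i = aby + ai' + i \in (ab, I)$, as required.

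I do not anticipate any obstacle; the argument is a one-line application of the nonzero-divisor condition. The only point that needs mention is that the hypothesis is used exactly once, to pass from $ax \in (b,I)$ to $x \in (b,I)$.
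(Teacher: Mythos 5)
Your argument is correct and is essentially identical to the paper's proof: both establish the trivial inclusion, then write $f=ax+i$, use membership in $(b,I)$ to conclude $ax\in(b,I)$, and apply the nonzero-divisor hypothesis to get $x\in(b,I)$ before substituting back. No issues.
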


\begin{proof}
Obviously one has $(ab,I)\subset (a,I)\sect(b,I)$. Conversely, let $f\in(a,I)\sect(b,I)$. Then
\[
f=ag_1+h_1=bg_2+h_2 \quad \text{with}\quad g_1,g_2\in S\quad \text{and}\quad h_1,h_2\in I.
\]
Therefore, $ag_1\in (b,I)$. Since $a$ is a nonzero divisor modulo $(b,I)$, it follows that $g_1=cb+h$ for some $c\in S$ and $h\in I$. Hence we get that $f=ag_1+h_1=a(cb+h)+h_1=abc+(ah+h_1)$. Thus $f\in (ab,I)$
\end{proof}

\begin{Lemma}
\label{nonzerodivisor}
Let $\Pc= \delta_1,\delta_2,\ldots, \delta_r$ be a path which is not  a line, and let $i>1$ be the smallest index for which $\delta_i$ has a free vertex $c$ (which we call a corner of the path). Let $a$ be the free vertex of $\delta_1$ whose first or second coordinate  coincides with that of $c$, see Figure~\ref{smallest}. Then $a$ does not belong to any minimal prime ideal of $I(\Pc)$.

In particular, if $I(\Pc)$ has no embedded prime ideals, the element $a$ is a nonzero divisor of $S/I(\Pc)$.
\end{Lemma}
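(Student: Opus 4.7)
The plan is to apply Theorem~\ref{minimalprimes}: every minimal prime of $I(\Pc)$ has the form $P_W(\Pc)$ for some admissible $W\subseteq V(\Pc)$, so it is enough to show that whenever $a\in W$, the prime $P_W(\Pc)$ fails to be minimal---i.e., that there exists an admissible $V\subsetneq W$ with $P_V(\Pc)\subsetneq P_W(\Pc)$. The ``in particular'' clause then follows because, in the absence of embedded primes, the associated primes of $S/I(\Pc)$ coincide with its minimal primes, so $a$ is a non-zero divisor on $S/I(\Pc)$ iff it avoids all of them.

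For the setup, I would normalize coordinates so that the straight segment is $\delta_j=[1,2\mid j,j+1]$ for $j=1,\ldots,i-1$, the last minor before the bend is $\delta_i=[1,2\mid i,i+1]$, and (after reflecting if needed) $\delta_{i+1}=[2,3\mid i,i+1]$; then $a=x_{11}$ and $c=x_{1,i+1}$. Admissibility of $W$ at $\delta_1$ forces an edge through $a$, hence $x_{12}\in W$ or $x_{21}\in W$. I would proceed by case analysis on $W\cap V(\delta_1)$.

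If $x_{22}\in W$, then $V=W\setminus\{a\}$ is admissible at $\delta_1$ via $\{x_{12},x_{22}\}$ or $\{x_{21},x_{22}\}$, and admissibility elsewhere is unchanged because $a$ lies only in $\delta_1$. If $x_{22}\notin W$ but $x_{21}\in W$, then since $x_{21}$ also lies only in $\delta_1$, one of $V=W\setminus\{x_{21}\}$ (when $x_{12}\in W$) or $V=W\setminus\{a,x_{21}\}$ (when $x_{12}\notin W$) works. The delicate case is $x_{21},x_{22}\notin W$ (so $x_{12}\in W$): iterating admissibility along the line, at each $\delta_j$ the hypotheses $x_{1,j}\in W$ and $x_{2,j}\notin W$ leave no admissible edge unless $x_{1,j+1}\in W$. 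Let $k$ be the smallest index at which this propagation halts, meaning either $x_{2,k}\in W$ for some $k\in\{3,\ldots,i+1\}$ or the propagation reaches $k=i+2$. In the first instance I take $V=W\setminus\{x_{1,1},\ldots,x_{1,k-1}\}$ (note that in the boundary subcase $k=i+1$ with $x_{2,i+1}\in W$ this keeps $x_{1,i+1}=c$ inside $V$, preserving admissibility at $\delta_i$ via its right edge), and in the second $V=W\setminus\{x_{1,1},\ldots,x_{1,i+1}\}$.

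The step I expect to require the most care is verifying $P_V(\Pc)\subseteq P_W(\Pc)$ via Theorem~\ref{minimalprimes}(a) in the propagation case. With $\Pc'=\{\delta\in\Pc:V(\delta)\cap V=\emptyset\}$ and $\Pc''=\{\delta\in\Pc:V(\delta)\cap W=\emptyset\}$, one must check that every inner $2$-minor $\delta=[1,2\mid b_1,b_2]\in\Gc(\Pc')\setminus\Gc(\Pc'')$ satisfies that $W\cap V(\delta)$ contains an edge of $\delta$. These new inner minors span the two-row strip over columns $\{1,\ldots,k-1\}$ (or $\{1,\ldots,i+1\}$ when $k=i+2$), and in each the top edge $\{x_{1,b_1},x_{1,b_2}\}$ lies in $W$ by the propagation argument. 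Strictness $P_V(\Pc)\subsetneq P_W(\Pc)$ is clear because the removed row-$1$ vertices belong to $W\setminus V$ but not to $P_V(\Pc)$. The earlier cases are easier since only $\delta_1$ itself can become newly inner. In every case $P_W(\Pc)$ is non-minimal, completing the argument.
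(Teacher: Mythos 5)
Your proof is correct and follows essentially the same route as the paper's: reduce via Theorem~\ref{minimalprimes} to showing that every admissible $W$ containing $a$ yields a non-minimal $P_W(\Pc)$, by exhibiting a smaller admissible $V$, with a case analysis on $W\cap V(\delta_1)$ whose key case is the propagation of $W$ along the top row of the initial straight segment up to the first corner. The only (harmless) imprecision is the claim that the newly inner minors lie in the two-row strip, resp.\ that only $\delta_1$ becomes newly inner in the easier cases: when the squares at the bend also miss $W$, wider inner minors such as $[1,m\mid i,i+1]$ can appear in $\Gc(\Cc')\setminus\Gc(\Cc'')$, but each of these still has an edge (its top edge $\{x_{1,i},x_{1,i+1}\}$, resp.\ the left edge $\{x_{1,1},x_{2,1}\}$) contained in $W$, so the verification via Theorem~\ref{minimalprimes}(a) goes through unchanged.
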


\begin{proof}
We may assume that, like in Figure~\ref{smallest}, the first $2$-adjacent minors up to the first corner form a horizontal path. Let $W$ be an admissible set with $a\in W$. In our discussion we refer to the notation given in Figure~\ref{smallest}.  Then, since $W$ is admissible,  we have $b\in W$ or $c\in W$.

First suppose that $c\in W$.
If $W=\{a,c\}$, then $P_{\emptyset}(\Pc)$ is a proper subset of $P_W(\Pc)$,
and so $P_W(\Pc)$ is not a minimal prime ideal of $I(\Cc)$.
Hence, we may assume that $\{a,c\}$ is a proper subset of $W$.
In case of $d \in W$,
it follows that $V=W\setminus \{a\}$
is an admissible set with $\Gc(V)=\Gc(W)$. In case of $b\in W$ it follows that $V=W\setminus \{c\}$
is an admissible set with $\Gc(V)=\Gc(W)$.
Hence in both cases it follows from Theorem~\ref{minimalprimes} that $P_W(\Pc)$ is not a minimal prime ideal of $I(\Cc)$.
On the other hand, in case of $d \notin W$ and $b \notin W$,
it follows that $V=W\setminus \{a,b\}$ is an admissible set with either $\Gc(V)=\Gc(W)$ or $\Gc(W)=\Gc(V) \cup \{ad-bc\}$. Hence by Theorem~\ref{minimalprimes}, $P_W(\Pc)$ is not a minimal prime ideal of $I(\Cc)$.

In the second case, suppose that $c\notin W$. Then $b\in W$. Let $a=(i,j)$ and $p=(k,j)$ with $k>i+1$, and let $[a,p]=\{(l,j)\:\, i\leq l\leq k\}$. Then $b\in [a,c]$. If $W=[a,p]$, then $P_W(\Pc)$ is not a minimal prime ideal of $I(\Cc)$, because in that case $P_{\emptyset}(\Pc)$ is a proper subset of $P_W(\Pc)$. On the other hand, if $W$ is a proper subset of $[a,p]$, then $W$ is not admissible. Hence there exists $e\in [a,p]$ such that $[a,e]\subset W$ and moreover $f$, as indicated in Figure~\ref{smallest}, belongs to  $W$.  We may assume that  $[b, f']\sect W=\emptyset$.  Let $V = W \setminus [a, e']$.  Then $V$ is admissible.  Since $\Gc(V) \setminus \Gc(W)$ consists of those adjacent $2$-minors which are indicated  in Figure~\ref{smallest} as the dark shadowed area, it follows from Theorem~\ref{minimalprimes} that $P_W(\Pc)$ is not a minimal prime ideal of $I(\Cc)$.
\end{proof}

\begin{figure}[hbt]
\begin{center}
\psset{unit=0.7cm}
\begin{pspicture}(4.5,-1)(4.5,4)
\pspolygon[style=fyp,fillcolor=light](6,0)(6,1)(7,1)(7,0)
\pspolygon[style=fyp,fillcolor=light](7,0)(7,1)(8,1)(8,0)
\pspolygon[style=fyp,fillcolor=light](8,0)(8,1)(9,1)(9,0)
\pspolygon[style=fyp,fillcolor=light](5,1)(5,2)(6,2)(6,1)
\pspolygon[style=fyp,fillcolor=light](6,1)(6,2)(7,2)(7,1)
\pspolygon[style=fyp,fillcolor=light](4,1)(4,2)(5,2)(5,1)
\pspolygon[style=fyp,fillcolor=light](8,1)(8,2)(9,2)(9,1)
\pspolygon[style=fyp,fillcolor=dark](3,1)(3,2)(4,2)(4,1)
\pspolygon[style=fyp,fillcolor=dark](2,1)(2,2)(3,2)(3,1)
\pspolygon[style=fyp,fillcolor=dark](1,1)(1,2)(2,2)(2,1)

\rput(7.2,2.2){$p$}
\rput(0.8,2.2){$a$}
\rput(2,2.3){$b$}
\rput(0.8,0.75){$c$}
\rput(2,0.75){$d$}
\rput(4,2.3){$e'$}
\rput(5,2.2){$e$}
\rput(4,0.68){$f'$}
\rput(5,0.68){$f$}
\rput(4.5,-0.5){$\Cc$}
\end{pspicture}
\end{center}
\caption{}\label{smallest}
\end{figure}
\noindent
The vertex $c$ in Figure~\ref{smallest} is the first corner of the path $\Cc$. Therefore, according to Lemma~\ref{nonzerodivisor},  the element $a$ is not contained in any minimal prime ideal of $I(\Cc)$.

\begin{proof}[Proof of Theorem~\ref{radicalpath}]
Let $\Pc=\delta_1,\delta_2,\ldots,\delta_r$ be a path and choose the vertex $a\in \delta_1$ as described in Proposition~\ref{nonzerodivisor}. Then our hypothesis implies that $a$ is a nonzero divisor modulo $I(\Pc)$. The graded version of Lemma 4.4.9 in \cite{BH} implies then that $I(\Pc)$ is a radical ideal if and only if $(a,I(\Pc))$ is a radical ideal. Thus it suffices to show that $(a,I(\Pc))$ is a radical ideal.

Let $\delta_1=ad-bc$ and $\Pc'=\delta_2,\ldots,\delta_r$ be the path which is obtained from $\Pc$ by removing $\delta_1$. Then $(a,I(\Pc))=(a,bc, I(\Pc'))$. Thus $(a,I(\Pc))$ is a radical ideal if $(bc, I(\Pc'))$ is a radical ideal, because $a$ is a variable which does not appear in  $(bc, I(\Pc'))$. Since $c$ is regular modulo $(b,I(\Pc'))$, we may apply Lemma~\ref{easy} and get that $(bc, I(\Pc'))=(b,I(\Pc'))\sect (c,I(\Pc'))$. By using induction of the length of the path we may assume that $I(\Pc')$ is a radical ideal. Since $c$ does not appear  $I(\Pc')$ it follows that $(c,I(\Pc))$ is a radical ideal. Thus it remains to be shown that $(b,I(\Pc'))$ is a radical ideal. Observe that $b$ is one of the vertices of $\delta_2$. If it is a free vertex we can argue as before. So we may assume that $b$ is not free. The following Figure~\ref{figure1}(i) and Figure~\ref{figure1}(ii) indicate (up to rotation and reflection)  the possible positions of $b$ in $\Pc'$.

\begin{figure}[hbt]
\begin{center}
\psset{unit=0.7cm}
\begin{pspicture}(-0.5,0)(-0.5,4)
\pspolygon[style=fyp,fillcolor=light](-6,1)(-5,1)(-5,2)(-6,2)
\pspolygon[style=fyp,fillcolor=light](-6,2)(-5,2)(-5,3)(-6,3)
\pspolygon[style=fyp,fillcolor=light](-6,3)(-5,3)(-5,4)(-6,4)
\rput(-5.5,0.3){$(i)$}
\rput(-5.5,1.5){$\delta_2$}
\rput(-5.5,2.5){$\delta_3$}
\rput(-5.5,3.5){$\delta_4$}
\rput(-6.3,1){$d$}
\rput(-6.3,2){$b$}
\rput(-6.3,3){$g$}
\rput(-4.7,1){$e$}
\rput(-4.7,2){$f$}
\rput(-4.7,3){$h$}
\rput(-5.5,4.6){$\vdots$}
\rput(-4.3,3.5){$\cdots$}

\pspolygon[style=fyp,fillcolor=light](3,1)(4,1)(4,2)(3,2)
\pspolygon[style=fyp,fillcolor=light](3,2)(4,2)(4,3)(3,3)
\pspolygon[style=fyp,fillcolor=light](4,2)(5,2)(5,3)(4,3)
\rput(3.5,0.3){$(ii)$}
\rput(3.5,1.5){$\delta_2$}
\rput(3.5,2.5){$\delta_3$}
\rput(4.5,2.5){$\delta_4$}
\rput(2.7,1){$d$}
\rput(2.7,2){$b$}
\rput(2.7,3){$g$}
\rput(4.3,1){$e$}
\rput(4.3,1.7){$f$}
\rput(4,3.3){$h$}
\rput(4.5,3.6){$\vdots$}
\rput(5.6,2.5){$\cdots$}
\end{pspicture}
\end{center}
\caption{}\label{figure1}
\end{figure}

In the case of Figure~\ref{figure1}(i) we have $(b,I(\Pc'))=(b,df,gf,I(\Pc''))$ where $\Pc''=\delta_4,\ldots,\delta_r$. Since the variable $b$ does not appear in $(df,gf,I(\Pc''))$ it follows that $(b,I(\Pc'))$ is a radical ideal if and only if $(df,gf,I(\Pc''))$ is a radical ideal.  Applying Lemma~\ref{easy} we see that $(df,gf,I(\Pc''))=(d,gf,I(\Pc''))\sect (f,I(\Pc''))$. By induction hypothesis we may assume that $(f,I(\Pc''))$ is a radical ideal. Thus it remains to be shown that $(d,gf,I(\Pc''))$ is a radical ideal which is the case if $(gf,I(\Pc''))$ is a radical ideal. Once again we apply Lemma~\ref{easy} and get $(gf,I(\Pc''))=(g,I(\Pc''))\sect (f,I(\Pc''))$. By assumption of induction we deduce as before that both ideals $(g,I(\Pc''))$ and $(f,I(\Pc''))$ are radical ideals. Therefore, $(gf,I(\Pc''))$ is a radical ideal.

In the case of Figure~\ref{figure1}(ii) a similar argument works.
\end{proof}

\section{Special contingency tables}

We call a contingency table $T=(a_{ij})$ with support in $\Sc$ {\em special}, if $\Sc=\{(i,j)\:\; x_{ij}\in V(\Cc)\}$ where $\Cc$ is a special configuration.  If $\Sc'\subset \Sc$ we denote by $T_\Sc'$ the restriction of $T$ to $\Sc'$. In other words,
$T_{\Sc'}=(a_{ij})_{(i,j)\in \Sc'}$.

Figure~\ref{contingency} shows the contingency table corresponding to the configuration  $\Cc$ of $2$-adjacent minors  as shown in Figure~\ref{admissible}.
One has to observe that in the corresponding contingency table the entries are displayed in cells whose center coordinates correspond to the vertices of $\Cc$.

\noindent
In this section we want to discuss connectedness of special contingency tables with respect to adjacent moves. For that we shall need the following result.

\begin{Proposition}
\label{latticeideal}
Let $\Cc$ be a special configuration of $2$-adjacent minors. We write $\Cc=\Union_{k=1}^r\Cc_k$ as a disjoint union of connected special configurations of $2$-adjacent minors, and set  $\Sc=\{(i,j)\:\; x_{ij}\in V(\Cc)\}$ and $\Sc_k=\{(i,j)\:\; x_{ij}\in V(\Cc_k)\}$ for $k=1,\ldots,r$. Then $I(\Cc)$ is a lattice basis ideal for the saturated  lattice $\Lc$ consisting of all tables $T=\{a_{ij}\:\; (i,j)\in\Sc\}$ such that for $k=1,\ldots,r$ the tables $T_{\Sc_k}$  have   row  and column sums equal to zero. Moreover, the lattice ideal $I_\Lc$ of $\Lc$ is generated by all inner $2$-minors of $\Cc$, that is, $I_\Lc=P_\emptyset(\Cc)=(P_\emptyset(\Cc_1),\ldots,P_\emptyset(\Cc_r))$.
\end{Proposition}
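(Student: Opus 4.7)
The plan is to split the proof into two main parts: first, establishing that $I(\Cc)$ is a lattice basis ideal of the saturated lattice $\Lc$; second, identifying $I_\Lc$ with $P_\emptyset(\Cc)$ via Proposition~\ref{saturation} and Qureshi's primality result. I would begin by reducing to the connected case: in a special configuration, two minors from different connected components cannot share even one vertex, because a single shared vertex would, by specialness, produce a $\delta\in\Cc$ having common edges with both and hence placing them in the same component. Consequently the $V(\Cc_k)$ are pairwise disjoint, the polynomial rings split, and $I(\Cc)$, $P_\emptyset(\Cc)$ and $\Lc$ all decompose additively along the components. Henceforth assume $\Cc$ is connected with $\Sc=V(\Cc)$.

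For the lattice-basis claim, saturation of $\Lc$ is immediate: $\Lc$ is the kernel of the integer linear map $\phi\colon\ZZ^\Sc\to\ZZ^R\oplus\ZZ^Q$ sending a table to its row and column sums, so $\ZZ^\Sc/\Lc$ embeds in a torsion-free group. Each generator $\delta=x_{ij}x_{i+1,j+1}-x_{i+1,j}x_{i,j+1}$ of $I(\Cc)$ corresponds to the vector $v_\delta\in\ZZ^\Sc$ with $\pm 1$ at the four vertices of $\delta$, which visibly lies in $\Lc$. To conclude that $\{v_\delta\}_{\delta\in\Cc}$ is a $\ZZ$-basis of $\Lc$ I would argue three things: (i) linear independence, proved inductively by isolating a minor that contains a vertex shared with no other minor; (ii) the rank identity $|\Cc|=\rk(\Lc)=|\Sc|-|R|-|Q|+1$, whose right-hand formula is the standard incidence-matrix computation on the bipartite graph with parts $R,Q$ and edge set $\Sc$ (connected because $\Cc$ is); and (iii) saturation of the sublattice $\sum_\delta\ZZ v_\delta$ inside $\Lc$. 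Steps (ii) and (iii) are the main obstacle and are exactly where specialness must enter in an essential way; I would carry the argument out by induction on $|\Cc|$, adding adjacent $2$-minors one at a time in an order compatible with specialness so that each addition raises both $\rk(\Lc)$ and the rank of the span by one while keeping the index equal to one.

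To identify $I_\Lc$ with $P_\emptyset(\Cc)$, observe that each inner $2$-minor $[a_1,a_2|b_1,b_2]=x_{a_1,b_1}x_{a_2,b_2}-x_{a_1,b_2}x_{a_2,b_1}$ has exponent vector with zero row and column sums, so $P_\emptyset(\Cc)\subset I_\Lc$. With the lattice-basis claim in hand, Proposition~\ref{saturation} yields $I(\Cc)\colon(\prod x_{ij})^\infty=I_\Lc$; since $\Lc$ is saturated $I_\Lc$ is prime, hence $I_\Lc$ is the unique minimal prime of $I(\Cc)$ avoiding all variables. By Qureshi's theorem, already invoked in the proof of Lemma~\ref{primeadmissible}, $P_\emptyset(\Cc)$ is also prime, contains $I(\Cc)$, and has pure-binomial generators so contains no variable. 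Thus any minimal prime of $I(\Cc)$ contained in $P_\emptyset(\Cc)$ is itself variable-free and must equal $I_\Lc$, giving $I_\Lc\subset P_\emptyset(\Cc)$, which combined with the reverse containment forces $P_\emptyset(\Cc)=I_\Lc$ and completes the proof.
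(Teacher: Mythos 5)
Your reduction to the connected case is sound (in a special configuration two minors lying in different components can share no vertex, exactly as you argue), and your second half --- sandwiching $I_\Lc$ between the saturation $I(\Cc)\colon(\prod x_{ij})^\infty$ of Proposition~\ref{saturation} and Qureshi's prime $P_\emptyset(\Cc)$ --- is essentially what the paper does, routed there through Theorem~\ref{primeintersection}. The genuine gap sits precisely where you flagged it, in step (ii): the rank identity $|\Cc|=|\Sc|-|R|-|Q|+1$ (with $R$, $Q$ the rows and columns meeting $\Sc$) is \emph{false} for some connected special configurations, so no induction can establish it. Take the cycle of Figure~\ref{cycle}: it is a connected special configuration, $V(\Cc)$ is the full $4\times4$ grid, so the right-hand side is $16-4-4+1=9$, while $|\Cc|=8$; the same already happens for the path obtained by deleting one minor from that cycle. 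In such cases the vectors $v_\delta$ span a proper rank-$|\Cc|$ sublattice of the zero-margin lattice, and your inductive step ``each added minor raises $\rk(\Lc)$ by one'' breaks exactly when the new minor contributes no new vertex to $\Sc$. So the direct identification of $\sum_\delta\ZZ v_\delta$ with the zero-margin lattice cannot be carried out by a rank count in the stated generality.

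By contrast, the paper takes $\Lc$ to be, by definition, the span $\sum_{\delta\in\Cc}\ZZ v_\delta$ and gets the lattice-basis property and saturation for free from the Eisenbud--Sturmfels fact (already invoked in Lemma~\ref{regular}) that the adjacent $2$-minors of the \emph{full} $m\times n$ grid form a basis of the saturated lattice of all zero-margin tables: a subset of a basis of a saturated lattice spans a direct summand, hence a saturated sublattice. This disposes of your step (iii) with no work --- it is not an obstacle at all --- and makes step (i) trivial; the description of $\Lc$ as the full zero-margin lattice is then deduced \emph{a posteriori} from the equality $I_\Lc=P_\emptyset(\Cc)$ rather than proved combinatorially. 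If you rewrite the proof, follow that route; and note that your difficulty with (ii) is not an artifact of your method but reflects a real restriction on the configurations for which the zero-margin description of $\Lc$ can hold.
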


\begin{proof}
We observed already in the proof of Lemma~\ref{regular} that for any configuration $\Cc$ of adjacent $2$-minors the ideal  $I(\Cc)$ is a lattice basis ideal of a saturated lattice $\Lc$. The basis elements of the lattice which correspond to the adjacent $2$-minors in $\Cc$ are tables of the form  $D=(d_{ij})$ such  that there exists an integer $k\in\{1,\ldots,r\}$  and integers   $i'$ and $j'$ such that $(i',j'), (i',j'+1),(i'+1,j'),(i'+1,j'+1)\in \Sc_k$,  $d_{i',j'}=d_{i'+1,j'+1}=1$, $d_{i',j'+1}=d_{i'+1,j'}=-1$,  and all for all other pairs $(i,j)$ one has  $d_{i,j}=0$. Since any other element in $\Lc$ is $\ZZ$-linear combination of such tables, it follows $\Lc$ consists of all tables $T=\{a_{ij}\:\; (i,j)\in\Sc\}$ as described in the proposition. As noted in Section~1, we have that $I_\Lc=I(\Cc)\: \prod_{(i,j)\in\Sc}x_{ij}$. Thus Theorem~\ref{primeintersection} implies that $I_\Lc=P_\emptyset(\Cc)$ which is exactly the ideal of inner $2$-minors of $\Cc$.
\end{proof}

\begin{figure}[hbt]
\begin{center}
\psset{unit=0.7cm}
\begin{pspicture}(-2,1)(0,5)
\pspolygon[style=fyp,fillcolor=light](-1,1)(0,1)(0,2)(-1,2)
\pspolygon[style=fyp,fillcolor=light](-2,1)(-1,1)(-1,2)(-2,2)
\pspolygon[style=fyp,fillcolor=light](-1,2)(0,2)(0,3)(-1,3)
\pspolygon[style=fyp,fillcolor=light](-2,2)(-1,2)(-1,3)(-2,3)
\pspolygon[style=fyp,fillcolor=light](-2,3)(-1,3)(-1,4)(-2,4)
\pspolygon[style=fyp,fillcolor=light](-1,3)(0,3)(0,4)(-1,4)
\pspolygon[style=fyp,fillcolor=light](0,3)(1,3)(1,4)(0,4)
\pspolygon[style=fyp,fillcolor=light](0,2)(1,2)(1,3)(0,3)
\pspolygon[style=fyp,fillcolor=light](-3,1)(-2,1)(-2,2)(-3,2)
\pspolygon[style=fyp,fillcolor=light](-3,2)(-2,2)(-2,3)(-3,3)
\rput(-2.5,2.5){$d$}
\rput(-0.5,1.5){$j$}
\rput(-1.5,1.5){$i$}
\rput(-0.5,2.5){$f$}
\rput(-1.5,2.5){$e$}
\rput(-1.5,3.5){$a$}
\rput(0.5,3.5){$c$}
\rput(0.5,2.5){$g$}
\rput(-2.5,1.5){$h$}
\rput(-0.5,3.5){$b$}
\end{pspicture}
\end{center}
\caption{}\label{contingency}
\end{figure}

Let $\Cc$ be a special configuration. Recall from Section 3 that all the minimal prime ideals of $I(\Cc)$ are of the form  $P_W(\Cc)=(W, P_\emptyset(\Cc'))$ where $W\subset V(\Cc)$ is an admissible set and  $\Cc'=\{\delta\in \Cc\:\, V(\delta)\sect W=\emptyset\}$.

Let $\Sc=\{(i,j)\:\; x_{ij}\in V(\Cc)$ and $T=\{c_{ij}\:\; (i,j)\in\Sc\}$ be a table with $c_{ij}\in\ZZ$. We define the support of $T$ to be the set $\supp T=\{x_{ij} \in\Sc\:\; c_{ij}\neq 0\}$. Now we have

\begin{Corollary}
\label{inside}
Let $\Cc$ be a special configuration of $2$-adjacent minors
with $\Sc=V(\Cc)$, and let $W\subset \Sc$ be an admissible set of $\Cc$. Then the binomial $f=\prod_{(i,j)\in\Sc} x_{ij}^{a_{ij}}-\prod_{(i,j)\in\Sc} x_{ij}^{b_{ij}}$ with $T=\{a_{ij}\:\; (i,j)\in\Sc\}$ and $T'=\{b_{ij}\:\; (i,j)\in\Sc\}$  belongs to $P_W(\Cc)=(W,P_\emptyset(\Cc'))$ if and only if one of the following conditions $(i)$ and $(ii)$ is satisfied:
\begin{enumerate}
\item[{\em (i)}] the inequalities
\[
\sum_{x_{ij}\in W}a_{ij}\geq 1\quad\text{and}\quad \sum_{x_{ij}\in W}b_{ij}\geq 1
\]
hold;
\item[{\em (ii)}] one has
$\supp(T-T')\subset V(\Cc')$ and the tables $T_{\Sc_K}$ and $T'_{\Sc_k}$ have the same row and column sums for all $k$, where $\Cc_1,\ldots,\Cc_r$ are the  connected special configurations such that $\Cc'=\Union_{k=1}^r\Cc_k$ is the (unique)  disjoint union of the $\Cc_k$, and $\Sc_k=\{(i,j)\:\; x_{ij}\in V(\Cc_k)\}$ for all $k$.
\end{enumerate}
\end{Corollary}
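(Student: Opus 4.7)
The corollary unpacks $P_W(\Cc)=(W)+P_\emptyset(\Cc')$ using primality of $P_W(\Cc)$ (Lemma~\ref{primeadmissible}) and the lattice-ideal description of $P_\emptyset(\Cc')$ (Proposition~\ref{latticeideal}). My plan is to split according to whether the monomials of $f$ are divisible by a variable of $W$. Since the $a_{ij}$ are non-negative, $\xb^T\in(W)$ is equivalent to $\sum_{x_{ij}\in W}a_{ij}\ge 1$, so condition~(i) says exactly that both monomials of $f$ lie in $(W)$; in that case $f\in(W)\subset P_W(\Cc)$ is immediate. Likewise, if (ii) holds, one factors out the common contribution on $\Sc\setminus V(\Cc')$, and the remaining binomial on $V(\Cc')$ has exponent-difference in the lattice $\Lc$ of Proposition~\ref{latticeideal}, so $f\in P_\emptyset(\Cc')\subset P_W(\Cc)$. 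This settles the ``if'' direction.

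For the converse, assume $f\in P_W(\Cc)$. If $\xb^T\in(W)$, then $\xb^{T'}=\xb^T-f\in P_W(\Cc)$; since $P_W(\Cc)$ is prime and its only variable generators are those of $W$, some variable of $W$ must divide $\xb^{T'}$, giving condition~(i). Otherwise neither monomial of $f$ lies in $(W)$. Passing to $\bar S=S/(W)$, a polynomial ring in the variables indexed by $\Sc\setminus W$, I would observe that the image of $P_\emptyset(\Cc')$ coincides with $P_\emptyset(\Cc')$ itself (viewed in $\bar S$), because its generators involve only variables indexed by $V(\Cc')\subset\Sc\setminus W$. Hence $f\in P_W(\Cc)$ becomes $\bar f\in P_\emptyset(\Cc')\bar S$, and both monomials of $\bar f$ are non-zero. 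Applying Proposition~\ref{latticeideal} to the special configuration $\Cc'=\Union_{k=1}^r\Cc_k$, this ideal equals the lattice ideal $I_\Lc$ of the saturated lattice $\Lc$ of tables on $V(\Cc')$ whose restrictions to each $V(\Cc_k)$ have vanishing row and column sums.

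The last step uses the standard description of the extension of a lattice ideal obtained by adjoining free variables: $\xb^U-\xb^{U'}\in I_\Lc\bar S$ iff the exponents agree on the free variables (those indexed by $(\Sc\setminus W)\setminus V(\Cc')$) and $U-U'$ restricted to $V(\Cc')$ lies in $\Lc$. Combined with the equalities $a_{ij}=b_{ij}=0$ for $(i,j)\in W$, which are forced by the assumption that neither monomial is in $(W)$, this reads exactly as $\supp(T-T')\subset V(\Cc')$ together with the row- and column-sum condition of~(ii). The main technical obstacle is precisely this final identification: recognising $\bar f\in P_\emptyset(\Cc')\bar S$ as an extended lattice-ideal membership and converting it into the concrete support-and-sums statement. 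All the genuinely combinatorial content is already packaged in Proposition~\ref{latticeideal}, so what remains is essentially bookkeeping.
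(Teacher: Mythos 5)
Your proposal is correct and follows essentially the same route as the paper: both directions rest on the primality of $P_W(\Cc)$ (so that a monomial in $P_W(\Cc)$ forces a variable of $W$ to divide it) together with the identification $P_\emptyset(\Cc')=I_\Lc$ from Proposition~\ref{latticeideal}. The only cosmetic difference is that where you invoke the ``standard description'' of a lattice ideal extended by free variables, the paper carries this out by hand, writing $f=ug$ with $g$ a binomial whose terms are coprime and checking that $g$ lies in $P_\emptyset(\Cc')$ and that $T-T'=T_0-T_0'$.
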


\begin{proof}
 Suppose that  $f\in P_W(\Cc)$ and that $\sum_{x_{ij}\in W}a_{ij}=0$ or $\sum_{x_{ij}\in W}b_{ij}=0$.
 Say,  $\sum_{x_{ij}\in W}a_{ij}=0$. Then $\sum_{x_{ij}\in W}b_{ij}=0$, because otherwise it would follow that $\prod_{(i,j)\in\Sc} x_{ij}^{a_{ij}} \in P_\emptyset(\Cc')S$. Since $P_\emptyset(\Cc')S$ is a prime ideal, this would imply that  one of the variables  $x_{ij}\in V(\Cc)$ would belong to  $P_\emptyset(\Cc')S$, a contradiction. Therefore, $f \in P_\emptyset(\Cc')S$. Let  $S'\subset S$ be the polynomial ring in the variables $x_{ij}\in V(\Cc')$. Then $P_\emptyset(\Cc')$ is an ideal of $S'$. Let $f=ug$, where $u$ is a monomial and $g$ is a binomial whose monomial terms have greatest common divisor $1$, say, $g=\prod_{(i,j)\in\Sc} x_{ij}^{c_{ij}}-\prod_{(i,j)\in\Sc} x_{ij}^{d_{ij}}$ with $T_0=\{c_{ij}\:\; (i,j)\in\Sc\}$ and $T_0'=\{d_{ij}\:\; (i,j)\in\Sc\}$. Since $f \in P_\emptyset(\Cc')S$ it follows that $g\in P_\emptyset(\Cc')$. Therefore, since $T-T'=T_0-T_0'$, we get that
 $\supp(T-T')=\supp(T_0-T_0')\subset V(\Cc')$. Hence  Proposition~\ref{latticeideal} implies that $(T_0)_{\Sc_k}$ and  $(T_0')_{\Sc_k}$ have the same row and column sums. Since $T=T_0+E$ and $T'=T_0'+E$ where $E$ is the table given by the exponents of the monomial $u$, we conclude that  $T_{\Sc_k}$ and  $T'_{\Sc_k}$ have the same row and column sums, too.

 Conversely suppose that $\sum_{(i,j)\in W}a_{ij}\geq 1\quad\text{and}\quad \sum_{(i,j)\in W}b_{ij}\geq 1$. Then $f\in (W)$, and hence $f\in P_W(\Cc)$. On the other hand, if $\supp(T-T')\subset V(\Cc')$ and the tables $T_{\Sc_k}$ and $T'_{\Sc_k}$ have the same row and column sums for all $k$, then, with the notation introduced above, we have that $\supp(T_0-T_0')\subset V(\Cc')$ and the tables $(T_0)_{\Sc_k}$ and $(T_0')_{\Sc_k}$ have the same row and column sums for all $k$. Hence  Proposition~\ref{latticeideal} yields that  $g\in P_\emptyset(\Cc')$,  and this implies that $f\in P_W(\Cc)$.
\end{proof}

\begin{Corollary}
\label{final}
Let $T=\{a_{ij}\:\; (i,j)\in\Sc\}$ and $T'=\{b_{ij}\:\; (i,j)\in\Sc\}$ be two special contingency tables supported in $\Sc$.

{\em (a)} If $T$ and $T'$ are connected with respect to adjacent moves, then, for $T$ and $T'$ either (i) or (ii)  of Corollary~\ref{inside} is satisfied for all  admissible sets $W\subset \Sc$.

{\em (b)} Let $\Cc$ be the configuration of $2$-adjacent minors with $\Sc=\{\{(i,j)\:\; x_{ij}\in V(\Cc)\}$, and assume that $I(\Cc)$ is a radical ideal.   Then  $T$ and $T'$ are connected with respect to adjacent moves, if and only if,  for $T$ and $T'$ either (i) or (ii)  of Corollary~\ref{inside} is satisfied for all  admissible sets $W\subset \Sc$.

In the above statements it is enough to consider admissible sets which correspond to minimal prime ideals of $I(\Cc)$ as characterized in Theorem~\ref{minimalprimes}(b).
\end{Corollary}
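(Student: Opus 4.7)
The plan is to combine three ingredients: the Diaconis--Eisenbud--Sturmfels correspondence \cite[Theorem 1.1]{DES} which translates connectedness into ideal membership, the primary decomposition of $\sqrt{I(\Cc)}$ from Theorem~\ref{primeintersection}, and the explicit membership criterion in Corollary~\ref{inside}. Throughout write $f=\prod_{(i,j)\in\Sc}x_{ij}^{a_{ij}}-\prod_{(i,j)\in\Sc}x_{ij}^{b_{ij}}$; by \cite[Theorem 1.1]{DES}, the tables $T$ and $T'$ are connected via adjacent moves if and only if $f\in I(\Cc)$.

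For part (a) I would first verify the inclusion $I(\Cc)\subset P_W(\Cc)$ for every admissible $W$. Given a generator $\delta=ad-bc\in\Cc$, either $V(\delta)\cap W=\emptyset$, so that $\delta\in\Gc(\Cc')\subset P_W(\Cc)$ by definition, or else the admissibility of $W$ forces $W\cap V(\delta)$ to contain an edge of $\delta$, which in turn places a variable from $W$ in each of the two monomial terms of $\delta$ and so gives $\delta\in(W)\subset P_W(\Cc)$. Hence, if $T$ and $T'$ are connected, then $f\in I(\Cc)\subset P_W(\Cc)$, and Corollary~\ref{inside} supplies condition (i) or (ii) for every admissible $W$.

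For part (b) the radicality hypothesis together with Theorem~\ref{primeintersection} gives
\[
I(\Cc)=\sqrt{I(\Cc)}=\bigcap_{W\text{ admissible}}P_W(\Cc).
\]
If (i) or (ii) of Corollary~\ref{inside} holds for every admissible $W$, then Corollary~\ref{inside} places $f$ in each $P_W(\Cc)$, whence $f\in I(\Cc)$, and \cite[Theorem 1.1]{DES} yields connectedness of $T$ and $T'$. For the final reduction to minimal primes, recall that $\sqrt{I(\Cc)}$ equals the intersection of its minimal prime ideals, and by Theorem~\ref{minimalprimes}(b) these minimal primes are precisely certain $P_W(\Cc)$; any non-minimal $P_{W'}(\Cc)$ contains some minimal $P_W(\Cc)$, so in view of Corollary~\ref{inside} the condition being tested at $W$ already implies it at $W'$, and restricting the verification in (i)/(ii) to the admissible sets indexing minimal primes suffices in both directions.

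The content is essentially a repackaging of results already established, so I do not anticipate a substantive obstacle; the only point that warrants attention is that the DES correspondence is applied to the binomial $f$ in the exponents $T$ and $T'$ directly, without cancelling the common monomial factor $\prod x_{ij}^{\min(a_{ij},b_{ij})}$, and this is harmless because both Corollary~\ref{inside} and \cite[Theorem 1.1]{DES} are phrased at the level of $f$ itself.
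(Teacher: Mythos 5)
Your proposal is correct and follows essentially the same route as the paper: translate connectedness into membership of the binomial $f$ in $I(\Cc)$ via \cite[Theorem 1.1]{DES}, use Theorem~\ref{primeintersection} (plus radicality for part (b)) to reduce to membership in the prime ideals $P_W(\Cc)$, and then invoke Corollary~\ref{inside}. Your direct verification that each generator of $I(\Cc)$ lies in $P_W(\Cc)$, and your remark on why only the minimal primes need be tested, are correct elaborations of steps the paper leaves implicit.
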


\begin{proof}
By \cite[Theorem 1.1]{DES} $T$ and $T'$ are  connected with respect to adjacent moves if and only if the monomial  $f=\prod_{(i,j)\in\Sc} x_{ij}^{a_{ij}}-\prod_{(i,j)\in\Sc} x_{ij}^{b_{ij}}$ belongs to $I(\Cc)$, where $\Cc$ is the configuration of adjacent $2$-minors with $\Sc=\{(i,j)\:\; x_{ij}\in V(\Cc)\}$. Since $\Cc$ is special it follows from Theorem~\ref{primeintersection}  that $f\in P_W(\Cc)$ for all admissible sets $W\in V(\Cc)$. Thus Corollary~\ref{inside} implies statement (a). In the case that $I(\Cc)$ is a radical ideal,  $f\in  I(\Cc)$ if and only if $f\in P_W(\Cc)$ for all admissible sets $W\in V(\Cc)$. This proves (b).
\end{proof}

It follows from Corollary~\ref{final} and Figure~\ref{minimalp} that two contingency tables $T$ and $T'$ as in Figure~\ref{contingency}  with the same row and column sums are connected via adjacent moves if and only the following conditions are satisfied: either both  tables  satisfies  the following  inequalities
\begin{eqnarray*}
a+e+i&\geq & 1\\
b+f+j&\geq & 1\\
e+f+g+i&\geq & 1\\
b+e+f+i&\geq & 1\\
d+e+f+g&\geq & 1\\
b+d+e+f&\geq & 1.
\end{eqnarray*}
or else  $a+e+i=0$ or $b+f+j=0$   for one of the tables and the remaining 4 inequalities hold for both tables. In this case, if  $a+e+i=0$ for one table, then one should have   that $\supp(T-T')\subset\{b,c,f,g\}$,  and if $b+f+j=0$ for one table,   then one should have  that $\supp(T-T')\subset\{d,e,h,i\}$.

{}

\end{document}